\theoremstyle{definition}
\newtheorem{Theorem}{Theorem}[section]
\newtheorem{Lemma}[Theorem]{Lemma}
\newtheorem{Corollary}[Theorem]{Corollary}
\newtheorem{Proposition}[Theorem]{Proposition}
\newtheorem{Remark}[Theorem]{Remark}
\newtheorem{Definition}[Theorem]{Definition}
\newcommand{\cal}{\mathcal}
\newcommand{\vol}{\text{Vol}}
\newcommand{\sat}{\text{sat}}
\title{Some formulas for epsilon multiplicity in local rings}
\begin{document}

\author{Stephen Landsittel }

\address{Stephen Landsittel, Department of Mathematics,
University of Missouri, Columbia, MO 65211, USA}
\email{sdlg6f@missouri.edu}

\begin{abstract}
    We prove that the epsilon multiplicity exists in a Noetherian local ring whenever the nildradical of the completion of $R$ has nonmaximal dimension. We also extend the volume equals multiplicity formula for the epsilon multiplicity to this setting.
\end{abstract}

\maketitle

\section{Introduction}

In this paper we extend a volume formula on epsilon multiplicity of ideals to local rings $R$ whose completion has a nildradical of nonmaximal dimension; that is $\dim N(\widehat{R})<\dim R$ where $N(\widehat{R})$ is the nildradical of the completion $\widehat{R}$ of $R$. This condition holds if $\widehat{R}$ is reduced at the minimal primes of maximal dimension.
\\

This is the most general condition in which we can expect limits of this type to exist, as in Theorem 1.1 \cite{C2} it is shown that the multiplicity of all graded families of $m_R$ primary ideals in a local ring $R$ exists if and only if the nildradical of $\widehat{R}$ has nonmaximal dimension.\\

If $R$ is a $d$-dimensional Noetherian local ring with completion $\widehat{R}$. Then the dimension of the nildradical of $\widehat{R}$ is less than $d$ if $\widehat{R}$ is generically reduced. The converse holds if $\widehat{R}$ is equidimensional. The definition of generically reduced is as follows.

\begin{Definition}
    Suppose that $R$ is a Noetherian local ring of dimension $d$. we say that $R$ is generically reduced if $R_P$ is a domain for all minimal primes $P$ of $R$ with maximal dimension.
\end{Definition}

Now we define the epsilon multiplicity.

\begin{Definition}
    Let $R$ be a Noetherian local ring of dimension $d$ with maximal ideal $m_R$, and let $I$ be an ideal of $R$. In \cite{UV} Ulrich and Validashti define the epsilon multiplicity of $I$ is as
    \begin{equation*}
        \varepsilon(I)
        = \limsup_{n\to\infty}\frac{\ell_R((I^n)^{\sat}/I^n)}{n^d/d!}.
    \end{equation*}The saturation $J^{\sat}$ of an ideal $J$ of $R$ is the ideal
    \begin{equation*}
        J^{\sat} := J:m_R^{\infty}
        = \cup_{i\geq 0}J:m_R^i.
    \end{equation*}If $I$ is $m_R$-primary, then $\varepsilon(I)$ exists as a limit and equals the classical multiplicity of $I$. Recent papers where epsilon multiplicities are discussed include \cite{CS}, \cite{SuD}, \cite{DDRV}, \cite{DRT}, and \cite{CL}. An example of Roberts in \cite{Ro1}, building on an example of Nagata from \cite{Na1} shows that the Rees algebra of saturated powers $\oplus_{i\geq 0}(I^i)^{\sat}$ of an ideal $I$ need not be a finitely generated $R$-algebra, so traditional methods of commutative algebra are generally not applicable. In \cite{CHST} it is shown that the epsilon multiplicity of an ideal can be irrational.
\end{Definition}

\begin{Definition}\label{dfintro2}
    If $(R,m_R)$ is a local ring, then we denote the $m_R$-adic completion of $R$ by $\widehat{R}$. A local ring $R$ is called analytically unramified if $\widehat{R}$ is reduced. 
\end{Definition}

We prove the following result, giving a very general condition for existence of epsilon multiplicity as a limit.

\begin{Theorem}\label{TheoremA} (Theorem \ref{thm3})
    Let $R$ be any $d$-dimensional Noetherian local ring such that the nildradical of $\widehat{R}$ has dimension less than $d$. Let $I\subset R$ an ideal. Then the epsilon multiplicity of $I$
    \begin{equation}\label{eq0.1}
        \varepsilon(I) = \lim_{n\to\infty}\frac{\ell_R((I^n)^{\sat}/I^n)}{n^d}
    \end{equation}exists as a limit.
\end{Theorem}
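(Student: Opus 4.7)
The plan is a two-step reduction to the analytically unramified case, in which existence of the limit in \eqref{eq0.1} is known from prior work (for instance from the theory of epsilon multiplicity developed in \cite{CS}). First I pass from $R$ to its completion. Each module $(I^n)^{\sat}/I^n$ is finitely generated and annihilated by a power of $m_R$, hence of finite length. By flatness of $R\to\widehat{R}$ one has $(I^n:m_R^k)\widehat{R}=I^n\widehat{R}:m_{\widehat{R}}^k$; taking the union over $k$ shows that saturation commutes with completion, and faithful flatness then gives $\ell_R((I^n)^{\sat}/I^n)=\ell_{\widehat{R}}((I^n\widehat{R})^{\sat}/I^n\widehat{R})$. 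So I may replace $R$ by $\widehat{R}$ and assume $R$ is complete with nilradical $N$ of dimension strictly less than $d$.

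Next, set $S=R/N$ and $J=IS$. Then $S$ is a complete reduced local ring of dimension $d$, so $\lim_n\ell_S((J^n)^{\sat}/J^n)/n^d$ exists. It therefore suffices to prove the error estimate
\[
\bigl|\ell_R((I^n)^{\sat}/I^n)-\ell_S((J^n)^{\sat}/J^n)\bigr|=o(n^d).
\]
I would identify saturation with local cohomology, $(I^n)^{\sat}/I^n=H^0_{m_R}(R/I^n)$, and apply $H^*_{m_R}$ to
\[
0\to N/(N\cap I^n)\to R/I^n\to S/J^n\to 0.
\]
The discrepancy is then controlled by $H^0_{m_R}(N/(N\cap I^n))$ and by a finite-length subquotient of $H^1_{m_R}(N/(N\cap I^n))$. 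By the Artin--Rees lemma, $N\cap I^n\supset I^{n-c}N$ for $n\geq c$ and some fixed $c$; since $N$ is nilpotent, filtering by its powers reduces the estimate to the analogous one for the finitely generated $S$-modules $N^i/N^{i+1}$, each of dimension at most $\dim N<d$. Standard polynomial growth bounds of degree $\leq\dim N$ for the $m_R$-torsion in $J$-adic quotients of such modules then yield the required $o(n^d)$ control.

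The main obstacle is making the error estimate above fully rigorous. Although $H^0_{m_R}(N/(N\cap I^n))$ is visibly of finite length and admits a bound of order $n^{\dim N}$, the image of $H^0_{m_R}(S/J^n)$ in $H^1_{m_R}(N/(N\cap I^n))$ is a finite-length submodule of an $m_R$-torsion module that is not a priori of finite length. Extracting a uniform-in-$n$ polynomial bound of degree $\dim N$ for this image, via the nilpotent filtration of $N$ and the analytically unramified theory of epsilon multiplicity for finitely generated $S$-modules of dimension strictly less than $d$, is the technical heart of the argument. Once this is in place, existence of the limit for $R$ follows immediately from existence for $S$.
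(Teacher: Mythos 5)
Your reduction to the complete case is sound and essentially identical to the paper's Corollary \ref{thm2} (flatness gives $(I^n)^{\sat}\widehat R = (I^n\widehat R)^{\sat}$, and the module $(I^n)^{\sat}/I^n$ is already complete since it has finite length, so the lengths match). The reduction to $S = R/N$ via local cohomology also starts out parallel to the paper: applying $H^\bullet_{m_R}$ to $0\to N/(N\cap I^n)\to R/I^n\to S/J^n\to 0$ produces exactly the two ``error'' terms the paper analyzes, namely $H^0_{m_R}(N/(N\cap I^n))\cong ((I^n)^{\sat}\cap N)/(I^n\cap N)$ (the paper's Lemma \ref{lem1}) and the image of the connecting map $\delta:H^0_{m_R}(S/J^n)\to H^1_{m_R}(N/(N\cap I^n))$, which one computes to be $(I^n+N)^{\sat}/((I^n)^{\sat}+N)$ (the module in the paper's Proposition \ref{prop1}). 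Your handling of the $H^0$ term via Artin--Rees/Swanson-type bounds and $\dim N < d$ is correct and matches Lemma \ref{lem1}.

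The gap is in your claim that the second term is also $o(n^d)$, which is what your proposed error estimate $|\ell_R((I^n)^{\sat}/I^n)-\ell_S((J^n)^{\sat}/J^n)|=o(n^d)$ amounts to. That estimate would prove the stronger statement $\varepsilon(I)=\varepsilon(I(R/N))$, which the paper does not assert and which is not true in general: the paper proves only that $\lim_n \ell_R\bigl((I^n+N)^{\sat}/((I^n)^{\sat}+N)\bigr)/n^d$ \emph{exists} (Proposition \ref{prop1}), and the derived identity is $\varepsilon(I)/d! = \varepsilon(I(R/N))/d! - \lim_n\ell_R\bigl((I^n+N)^{\sat}/((I^n)^{\sat}+N)\bigr)/n^d$, with the last limit in general nonzero. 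Your plan to control $\mathrm{im}(\delta)$ ``via the nilpotent filtration of $N$'' and bounds of degree $\dim N$ cannot work: although $\mathrm{im}(\delta)$ sits inside $H^1_{m_R}$ of a module of dimension $\le \dim N<d$, it is \emph{not} a subquotient of anything of dimension $< d$; as the paper shows it is naturally an $\overline R=R/N$-module $J_n/I_n$ with $J_n\cap m_{\overline R}^{cn}=I_n\cap m_{\overline R}^{cn}$, and the only length bound this gives is $O(n^d)$, not $o(n^d)$. What is needed instead is the existence of the limit, which the paper obtains by checking that $\{I_n\}$, $\{J_n\}$ are filtrations of $\overline R$ (Lemma \ref{lem3}), proving the uniform comparison via Swanson's theorem (Lemma \ref{lem2}, Remark \ref{rmk3}), and then invoking Cutkosky's volume theorem for graded families (\cite{C2} Theorem 6.1). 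That filtration argument is the actual technical heart, and it is not a refinement of the estimate you propose but a replacement for it.
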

Theorem \ref{TheoremA} is proven in \cite{C4} for more general families of modules when $R$ is a local domain essentially of finite type over a perfect field $k$ such that $R/m_R$ is algebraic over $k$, and in the case that $R$ is analytically unramified in \cite{C2} Corollary 6.3. \\

Theorem \ref{TheoremA} is a consequence of Theorem \ref{thm1} and Corollary \ref{thm2}. We show that the limit of (\ref{eq0.1}) is a sum of two existent limits. Let $N$ be the nildradical of $\widehat{R}$. In Section 2, we prove Theorem \ref{TheoremA} by first reducing to the case where $R$ is complete, and we then compare the limit $\varepsilon(I(R/N))$, which exists by Corollary 6.3 \cite{C2}, to the limit of (\ref{eq0.1}) using several exact sequences. We analyze the kernels and cokernels of these exact sequences; one of these cokernels is the module $(I^n+N)^{\sat}/((I^n)^{\sat}+N)$ (where $n$ is a fixed positive integer). To analyze the module $(I^n+N)^{\sat}/((I^n)^{\sat}+N)$, we must pay careful attention to the relationship between extending ideals of $R$ to $R/N$ and saturation of ideals. In general, for ideals $I$ of $R$, $(I^{\sat})(R/N)$ need not equal $(I(R/N))^{\sat}$. They are equal if and only if $I^{\sat}+N$ equals $(I+N)^{\sat}$. In general, only $I^{\sat}+N\subset (I+N)^{\sat}$. We subvert this problem by analyzing the limit
\begin{equation*}
    \lim_{n\to\infty}\frac{\ell_R((I^n+N)^{\sat}/((I^n)^{\sat}+N))}{n^d}.
\end{equation*}We prove in Proposition \ref{prop1} that this limit exists.
\\

We will apply Swanson's Theorem 3.4 \cite{S1} in the proof of Lemma \ref{lem2}, where we show that certain families of ideals which are closely related to that of (\ref{eq0.1}) satisfy the hypothesis of Theorem 6.1 \cite{C2}.\\

Before discussing the volume equals multiplicity formula for the epsilon multiplicity, we first give a brief history of volume equals multiplicity formulas associated to graded families of ideals. A family of ideals $\cal{I} = \{I_n\}$ indexed by the natural numbers is called graded if $I_0 = R$ and $I_mI_n\subset I_{m+n}$ for all $m$ and $n$. Cutkosky showed in \cite{C2} that the volume associated to the graded family $\cal{I}$
\begin{equation*}
    \vol(\cal{I}):= \lim_{n\to\infty}\frac{\ell_R(R/I_n)}{n^d/d!}
\end{equation*}exists whenever the dimension of the nildradical of $\widehat{R}$ is less than $\dim(R)$. The volume equals multiplicity formula for the family $\cal{I}$ states that
\begin{equation*}
        \lim_{n\to\infty}\frac{\ell_R(R/I_n)}{n^d/d!}
        =\lim_{p\to\infty}\frac{e(I_p)}{p^d}
    \end{equation*}
    and was proven when $R$ is the local ring of a point on an algebraic variety over an algebraically closed field in \cite{LM}. The formula was proven when $R$ is analytically unramified in Theorem 6.5 \cite{C2} and when the dimension of the nildradical of $\widehat{R}$ is less than $\dim(R)$ in \cite{C5}, which is the most general possible case that this theorem can hold by Theorem 1.1 \cite{C2}. In this paper we extend the result of \cite{C5} to the epsilon multiplicity of an ideal $I$.
    \\

We prove a volume equals multiplicity formula for the epsilon multiplicity in a local ring whose completion is reduced at each minimal prime of maximal dimension. We show that the epsilon multiplicity is a limit of Amao multiplicities in this setting. We state the definition of Amao multiplicity in Section 2, which was defined and developed in \cite{A1}.

\begin{Theorem}\label{TheoremB} (Theorem \ref{thm5})
    Let $R$ be a $d$-dimensional Noetherian local ring and let $I\subset R$ be an ideal in $R$. Suppose that the dimension of the nildradical of $\widehat{R}$ is less than $d$. Then the limit
    \begin{equation*}
        \lim_{m\to\infty}\frac{a(I^m,(I^m)^{\sat})}{m^d}
    \end{equation*} exists, and
\begin{equation*}
    \varepsilon(I)=\lim_{m\to\infty}\frac{a(I^m,(I^m)^{\sat})}{m^d}.
\end{equation*}
\end{Theorem}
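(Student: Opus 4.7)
The plan is to combine Theorem \ref{TheoremA} with an asymptotic comparison of the ideals $((I^m)^{\sat})^k$ and $(I^{mk})^{\sat}$, mirroring the structure of the volume-equals-multiplicity theorem for graded families in \cite{C2,C5}, but now phrased relative to saturation rather than with respect to $m_R$. First, I would pass to the completion: every length that occurs, both in the Amao limit and in the epsilon limit, is of an $m_R$-torsion module, hence unchanged under completion, and saturation commutes with completion on such modules. So we may assume $R$ is complete with nildradical $N$ satisfying $\dim N < d$.

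Next, for each fixed $m$, I would establish that the inner Amao limit
\begin{equation*}
    a(I^m,(I^m)^{\sat}) \;=\; \lim_{k\to\infty}\frac{d!}{k^d}\,\ell_R\bigl(((I^m)^{\sat})^k/I^{mk}\bigr)
\end{equation*}
exists as a genuine limit (not merely a limsup). For fixed $m$ the family $\{((I^m)^{\sat})^k\}_{k\geq 0}$ is graded, and Swanson's Theorem 3.4 of \cite{S1} furnishes a constant $c$ with $((I^m)^{\sat})^k\subset I^{m(k-c)}$ for all $k\geq c$; this places the family inside the hypotheses of Theorem 6.1 of \cite{C2}, in direct analogy with the argument of Lemma \ref{lem2} in the present paper.

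The core of the proof is the short exact sequence arising from $((I^m)^{\sat})^k\subset (I^{mk})^{\sat}$,
\begin{equation*}
    0\to ((I^m)^{\sat})^k/I^{mk}\to (I^{mk})^{\sat}/I^{mk}\to (I^{mk})^{\sat}/((I^m)^{\sat})^k\to 0.
\end{equation*}
Theorem \ref{TheoremA} controls the middle term: for any fixed $m$, $(d!/(mk)^d)\,\ell_R((I^{mk})^{\sat}/I^{mk})\to\varepsilon(I)$ as $k\to\infty$. The first term, divided by $m^d k^d$, converges to $a(I^m,(I^m)^{\sat})/m^d$ and gives the immediate bound $a(I^m,(I^m)^{\sat})/m^d\leq\varepsilon(I)$ for every $m$.

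The step I expect to be the main obstacle is the matching lower bound, namely showing
\begin{equation*}
    \limsup_{m\to\infty}\limsup_{k\to\infty}\frac{d!}{(mk)^d}\,\ell_R\bigl((I^{mk})^{\sat}/((I^m)^{\sat})^k\bigr)=0.
\end{equation*}
Here the hypothesis $\dim N<d$ enters through the same reduction strategy used in Section 2. In $\widehat{R}/N$, which is complete and reduced (hence analytically unramified), the corresponding cokernel has vanishing normalized limit by the results of \cite{C2,C5} applied to the reduced ring, and the discrepancy between the statement in $R$ and in $R/N$ is absorbed by Proposition \ref{prop1}, which controls modules of the form $(I^n+N)^{\sat}/((I^n)^{\sat}+N)$ and other $N$-related error terms. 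Once this vanishing is established, combining both inequalities yields $\lim_{m\to\infty} a(I^m,(I^m)^{\sat})/m^d=\varepsilon(I)$, which simultaneously proves existence of the limit and its identification with the epsilon multiplicity.
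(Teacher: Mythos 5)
Your outline is reasonable in spirit—pass to the completion, use an exact sequence to compare $((I^m)^{\sat})^k/I^{mk}$ with $(I^{mk})^{\sat}/I^{mk}$, and control the cokernel—and the first two steps match the overall philosophy. However, you have correctly identified the main obstacle and then not resolved it. The claim that the double limit of $\tfrac{d!}{(mk)^d}\ell_R\bigl((I^{mk})^{\sat}/((I^m)^{\sat})^k\bigr)$ vanishes is precisely equivalent (given the exact sequence and existence of the other limits) to the theorem you are trying to prove, so "establishing this vanishing" cannot be outsourced to a vague reduction. Your suggestion that the discrepancy between $R$ and $R/N$ is "absorbed by Proposition~\ref{prop1}" does not hold up: Proposition~\ref{prop1} asserts the \emph{existence} (not vanishing) of the limit of $\ell_R((I^n+N)^{\sat}/((I^n)^{\sat}+N))/n^d$ in a single index $n$, whereas the cokernel you must control lives in two indices $m,k$ and involves both a change of numerator (from $(I^{mk})^{\sat}$ to $(I^{mk}+N)^{\sat}$) and a change of denominator (from $((I^m)^{\sat})^k$ to $((I^m+N)^{\sat})^k+N$). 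A direct comparison of $\ell_R\bigl((I^{mk})^{\sat}/((I^m)^{\sat})^k\bigr)$ with $\ell_{\overline{R}}\bigl((\tilde I^{mk})^{\sat}/((\tilde I^m)^{\sat})^k\bigr)$ produces several error terms that Proposition~\ref{prop1} does not bound; the reference to \cite{C2,C5} for the reduced case also isn't quite right, since the epsilon-multiplicity statement in the reduced ring is \cite{CL}, Theorem~1.1.

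The paper's proof avoids your cokernel head-on and instead works with two filtrations in $\overline{R}=\widehat R/N$, namely $I_n=((I^n)^{\sat}+N)/N$ and $J_n=(I^n+N)^{\sat}/N$. It proves a volume-equals-multiplicity statement for $\{J_n\}$ (Lemma~\ref{lem5}, via \cite{CL} Theorem~1.1 in $\overline{R}$) and a volume-equals-multiplicity statement for the \emph{difference} $\ell_{\overline R}(J_n/I_n)$ (Proposition~\ref{prop2}, via \cite{CL} Theorem~4.1 together with the key comparison Lemma~\ref{lem6}), then subtracts and translates back to $R$ by Lemma~\ref{lem7} and Remark~\ref{rmk2}. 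The essential technical input you have not invoked is \cite{CL} Theorem~4.1 — a volume-equals-multiplicity theorem for a pair of filtrations agreeing modulo $m^{cn}$ — and without it (or an equivalent), the vanishing you need cannot be deduced from the tools already on the table. Finally, a minor point: the inner Amao limit $a(I^m,(I^m)^{\sat})$ exists by Amao's theorem (\cite{A1}, \cite{R1}); you do not need Swanson plus \cite{C2} Theorem~6.1 for that step.
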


Theorem \ref{TheoremB} is proven in the case that $R$ is analytically unramified in Theorem 1.1 \cite{CL}. In order to prove Theorem \ref{TheoremB} we prove two formulas in Lemma \ref{lem5} and Proposition \ref{prop2}, and the sum of these two equations is the volume equals multiplicity formula of Theorem \ref{TheoremB}. Lemma \ref{lem5} and Proposition \ref{prop2} are the volume formulas corresponding to the limits studied in the proof of Theorem \ref{TheoremA}.

\section{Proofs of the main theorems}

We now begin proving Theorem \ref{TheoremA} regarding the epsilon multiplicity. The epsilon multiplicity was defined in Section 1. We first prove a remark.

\begin{Remark}\label{rmk2.1}
    Let $R$ be any ring and let $I,J,G\subset R$ be ideals. We have the following equations.
    \begin{enumerate}
        \item[(i)]If $R$ is local and Noetherian
        \begin{equation*}
            \bigg(\bigg(\frac{J+G}{G}\bigg)^n\bigg)^{\sat}
            = \frac{(J^n+G)^{\sat}}{G}.
        \end{equation*} 
        \item[(ii)]
        \begin{equation*}
            \bigg(\frac{J+G}{G}\bigg)^n
            = \frac{J^n+G}{G}.
        \end{equation*}
        \item[(iii)] If $G\subset I\cap J$, then 
        \begin{equation*}
            \frac{I\cap J}{G} = (I/G)\cap (J/G).
        \end{equation*}
    \end{enumerate}
\end{Remark}

\begin{proof}
    We prove (i). Let $m$ be the maximal ideal of $R$. Let $t\in \mathbb{Z}_{>0}$ be so large that
    \begin{equation*}
        \bigg(\bigg(\frac{J+G}{G}\bigg)^n\bigg)^{\sat}
        = \bigg(\frac{J+G}{G}\bigg)^n:\bigg(\frac{m+G}{G}\bigg)^t
        = \frac{J^n+G}{G}:\frac{m^t+G}{G}
    \end{equation*}We see that
    \begin{equation*}
        \begin{split}
            \frac{J^n+G}{G}:\frac{m^t+G}{G}
            &= \{x+G\in R/G\mid
            xm^t\subset J^n+G\}\\
            &=
            \{x+G\in R/G\mid
            x\in (J^n+G):_Rm^t\}
            \\&= [(J^n+G):_Rm^t]/G
            =(J^n+G)^{\sat}/G.
        \end{split}
    \end{equation*}This finishes the proof of (i).
\end{proof}

We will prove the following.

\begin{Theorem}\label{thm1}
    Let $R$ be a complete Noetherian local ring of dimension $d>0$ such that the nildradical of $R$ has dimension less than $d$. Let $I\subset R$ be an ideal. Then the limit
     \begin{equation*}
        \lim_{n\to\infty}\frac{\ell_R((I^n)^{\sat}/I^n)}{n^d}
    \end{equation*}exists.\\
\end{Theorem}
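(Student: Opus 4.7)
The plan is to reduce to the reduced quotient $\bar{R} := R/N$, where $N$ is the nilradical of $R$. Since $R$ is complete, $\bar{R}$ is a reduced (in particular analytically unramified) Noetherian local ring of dimension $d$, so Corollary 6.3 of \cite{C2} applies to graded families of ideals in $\bar{R}$. The strategy is to write $\ell_R((I^n)^{\sat}/I^n)$ as a sum of two pieces---a ``reduced'' part pulled back from $\bar{R}$ and a ``nilpotent'' part contained in $N$---and to show each normalized piece converges.

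First, I would establish the additive decomposition. A direct check shows $(I^n)^{\sat}\cap(I^n+N) = I^n + ((I^n)^{\sat}\cap N)$, and combining the second isomorphism theorem with additivity of length along the filtration $I^n \subset I^n+((I^n)^{\sat}\cap N)\subset (I^n)^{\sat}$ yields
\[
\ell_R\bigl((I^n)^{\sat}/I^n\bigr) = \ell_R\bigl(((I^n)^{\sat}+N)/(I^n+N)\bigr) + \ell_R\bigl(((I^n)^{\sat}\cap N)/(I^n\cap N)\bigr).
\]
The first summand is a length of a subquotient of $\bar{R}$, while the second lies inside $N$.

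Second, I would handle the reduced part. By Remark \ref{rmk2.1}(i)(ii), $\ell_R((I^n+N)^{\sat}/(I^n+N))$ equals $\ell_{\bar{R}}(((I\bar{R})^n)^{\sat}/(I\bar{R})^n)$, whose limit divided by $n^d$ exists by Corollary 6.3 of \cite{C2} applied to $\bar{R}$. The reduced part of our decomposition differs from this quantity by the cokernel length $\ell_R((I^n+N)^{\sat}/((I^n)^{\sat}+N))$, whose normalized limit exists by Proposition \ref{prop1}; Proposition \ref{prop1} is proved via Lemma \ref{lem2} using Swanson's Theorem 3.4 of \cite{S1} to verify the hypotheses of Theorem 6.1 of \cite{C2} for an auxiliary graded family closely related to $\{(I^n)^{\sat}+N\}$. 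Subtracting, the normalized limit of the reduced part exists.

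The main obstacle will be the nilpotent summand $\ell_R(((I^n)^{\sat}\cap N)/(I^n\cap N))/n^d$. Both $\{I^n\cap N\}$ and $\{(I^n)^{\sat}\cap N\}$ form graded families of submodules of $N$, and since $\dim N < d$ the individual colengths in $N$ should grow as $O(n^{\dim N})$, forcing this normalized difference to tend to $0$. Making this rigorous requires controlling the saturation within $N$, which I expect to achieve by an Artin--Rees estimate combined with a module-valued analog of the existence theorems of \cite{C2} applied to the module $N$. Once the nilpotent term is shown to vanish in the limit, summing with the reduced part gives the existence of the limit in Theorem \ref{thm1}.
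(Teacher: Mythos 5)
Your proposal follows the same decomposition and overall structure as the paper's proof: the length splits as $\ell_R(((I^n)^{\sat}+N)/(I^n+N)) + \ell_R(((I^n)^{\sat}\cap N)/(I^n\cap N))$, the reduced summand is handled via Corollary 6.3 of \cite{C2} on $\bar{R}$ together with the cokernel estimate of Proposition \ref{prop1}, and the nilpotent summand is argued to vanish because $\dim N < d$. The one place your sketch is heavier than the paper's argument is the nilpotent term: the paper's Lemma \ref{lem1} needs no ``module-valued analog of the existence theorems of \cite{C2}.'' It simply invokes Swanson's Theorem 3.4 of \cite{S1} to produce a uniform $b$ with $(I^n)^{\sat}\cap m_R^{nb} = I^n\cap m_R^{nb}$, then chases two natural maps to obtain $\ell_R(((I^n)^{\sat}\cap N)/(I^n\cap N)) \le \ell_R(N/Nm_R^{nb})$, and the right-hand side is $O(n^{\dim N})$ by the ordinary Hilbert--Samuel polynomial of the module $N$, hence $o(n^d)$. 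So what you call an ``Artin--Rees estimate'' is exactly the right ingredient (Swanson), but the asymptotic input is just classical Hilbert--Samuel growth for a module of dimension $< d$, not any of the limit-existence machinery.
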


\begin{Lemma}\label{lem2-1}
    Let $R$ be any ring, and $I,J,K$ ideals such that $I\supset J$ or $I\supset K$. Then
    \begin{equation*}
        I\cap (J+K)
         = (I\cap J)+ (I\cap K).
    \end{equation*}
\end{Lemma}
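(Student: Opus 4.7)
The plan is to prove both containments separately. The inclusion $(I\cap J)+(I\cap K)\subset I\cap(J+K)$ holds for arbitrary ideals $I$, $J$, $K$ with no hypothesis needed: any element of $I\cap J$ lies in $I$ and in $J\subset J+K$, and similarly for $I\cap K$, so their sum lies in $I\cap(J+K)$.

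For the reverse inclusion $I\cap(J+K)\subset(I\cap J)+(I\cap K)$, I will use the hypothesis that $I\supset J$ or $I\supset K$. By symmetry of the statement in $J$ and $K$, I may assume without loss of generality that $I\supset J$. Take any $x\in I\cap(J+K)$ and write $x=j+k$ with $j\in J$, $k\in K$. Since $J\subset I$, we have $j\in I$, so $j\in I\cap J=J$. Moreover, $k=x-j\in I$ because both $x$ and $j$ lie in $I$, hence $k\in I\cap K$. Thus $x=j+k\in(I\cap J)+(I\cap K)$, completing the proof.

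There is no real obstacle here; the lemma is essentially the modular law for ideals (sometimes called Dedekind's modular identity), and the hypothesis $I\supset J$ (or $I\supset K$) is exactly what is needed to conclude that the second summand $k=x-j$ lies in $I$. The only subtlety is remembering that without one of these containments the identity can fail, as the classical example of three lines through the origin in a plane shows.
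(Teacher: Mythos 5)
Your proof is correct and follows essentially the same route as the paper's: establish the easy containment, then assume without loss of generality that $I\supset J$, decompose $x=j+k$, and use $x,j\in I$ to conclude $k\in I\cap K$.
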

\begin{proof}
Without loss of generality $I\supset J$, so that
\begin{equation}\label{eq2-1}
    (I\cap J)+ (I\cap K) = J + (I\cap K).
\end{equation} Since $I\cap J,I\cap K\subset I \cap (J+K)$, we have
\begin{equation*}
    (I\cap J)+ (I\cap K)\subset I\cap (J+K).
\end{equation*}Let $x\in I\cap (J+K)$.

Write $x = a+b$ with $a\in J$ and $b\in K$. We have $b = x-a\in I+J = I$ since $I\supset J$. Then $b\in I\cap K$ so that $x\in J+ (I\cap K)$. We now have that $I\cap (J+K)\subset J+(I\cap K) = (I\cap J)+ (I\cap K)$ by (\ref{eq2-1}). This finishes the proof of the lemma.
\end{proof}

We now fix a complete Noetherian local ring $R$ of dimension $d>0$, let $N$ be the nildradical of $R$, and assume that $\dim_R N<d$. Let $I$ be an ideal of $R$.

\begin{Lemma}\label{lem1}
    The limit
    \begin{equation*}
        \lim_{n\to\infty}\frac{\ell_R([(I^n)^{\sat}\cap N]/[I^n\cap N])}{n^d}
    \end{equation*}exists and equals zero
\end{Lemma}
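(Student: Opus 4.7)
The plan is to identify $M_n = [(I^n)^{\sat}\cap N]/[I^n\cap N]$ with the $m_R$-torsion submodule of $N/(I^n\cap N)$, and then use the hypothesis $\dim_R N<d$ to bound $\ell_R(M_n)$ by a polynomial in $n$ of degree strictly less than $d$, which forces the limit to vanish.

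First I would verify the identification $M_n = \Gamma_{m_R}(N/(I^n\cap N))$. Given $x\in N$ and $T\ge 0$, the condition $xm_R^T\subset I^n$ is equivalent to $xm_R^T\subset I^n\cap N$, since $N$ is an ideal of $R$ and hence $xm_R^T\subset N$ automatically. Thus $x$ represents a class in the $m_R$-torsion of $N/(I^n\cap N)$ if and only if $x\in (I^n)^{\sat}\cap N$. Being a subquotient of $N$, the module $M_n$ is annihilated by $\ann(N)$, so it is naturally a module over the complete local ring $S := R/\ann(N)$, whose Krull dimension is $e := \dim_R N<d$, and $\ell_R(M_n)=\ell_S(M_n)$.

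Next, applying the Artin--Rees lemma to the submodule $N\subset R$ with the ideal $I$ yields an integer $c\ge 0$ such that $I^n\cap N = I^{n-c}N_0$ for all $n\ge c$, where $N_0:=I^c\cap N$ is a fixed finitely generated submodule of $N$. Applying the left-exact $m_R$-torsion functor to the short exact sequence
\begin{equation*}
    0\to N_0/I^{n-c}N_0\to N/I^{n-c}N_0\to N/N_0\to 0
\end{equation*}
produces
\begin{equation*}
    \ell_R(M_n)\le \ell_R(\Gamma_{m_R}(N_0/I^{n-c}N_0)) + \ell_R(\Gamma_{m_R}(N/N_0)),
\end{equation*}
and the second summand is a constant independent of $n$.

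The main obstacle is then to bound $\ell_R(\Gamma_{m_R}(N_0/I^kN_0))$ by a polynomial in $k$ of degree at most $e$. The graded object $\bigoplus_{k\ge 0}\Gamma_{m_R}(N_0/I^kN_0)\,t^k$ carries a natural $R[It]$-module structure and is annihilated by $\ann(N)$, so it is a module over the Rees algebra $R[It]/\ann(N)R[It]$, which has Krull dimension $e+1$; a Hilbert-function argument then yields the bound of degree at most $e$. Combined with the previous step, $\ell_R(M_n)=O(n^e)$ with $e<d$, so $\ell_R(M_n)/n^d\to 0$. Making the Hilbert-function argument rigorous is the delicate part, since the relevant graded saturation module need not be finitely generated over $R[It]$ without additional input such as a module-theoretic version of Swanson's linear boundedness theorem, which requires case analysis depending on whether the image of $I$ in $R/\ann(N_0)$ is $m$-primary.
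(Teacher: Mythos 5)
Your opening identification of $M_n := [(I^n)^{\sat}\cap N]/[I^n\cap N]$ with $\Gamma_{m_R}(N/(I^n\cap N))$ is correct, and the overall strategy (exploit $\dim N<d$ to bound $\ell_R(M_n)$ sub-polynomially) is the right one. But the proof has a genuine gap, which you acknowledge yourself: the crucial step of bounding $\ell_R(\Gamma_{m_R}(N_0/I^kN_0))$ by a polynomial of degree $\le e$ is left open, and filling it via a finite-generation argument over the Rees algebra is precisely the sort of thing that requires a uniform (Swanson-type) Artin--Rees statement, not the ordinary Artin--Rees lemma you invoke. The detour through $I^n\cap N = I^{n-c}N_0$ and the graded torsion module thus trades the original problem for an equivalent one without solving it.

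The paper avoids the difficulty by bringing Swanson's Theorem 3.4 \cite{S1} in at the very start, and only in its ring form, which is what you were missing. That theorem gives a single $b>0$ with
\begin{equation*}
(I^n)^{\sat}\cap m_R^{nb} = I^n\cap m_R^{nb}\qquad\text{for all }n.
\end{equation*}
With this in hand one never needs Artin--Rees or a graded Hilbert-function argument: the paper exhibits a surjection
\begin{equation*}
\frac{(I^n)^{\sat}\cap N}{I^n\cap Nm_R^{nb}}\twoheadrightarrow \frac{(I^n)^{\sat}\cap N}{I^n\cap N}
\end{equation*}
together with an injection
\begin{equation*}
\frac{(I^n)^{\sat}\cap N}{I^n\cap Nm_R^{nb}}\hookrightarrow \frac{N}{Nm_R^{nb}},
\end{equation*}
the latter being injective exactly because any $x\in (I^n)^{\sat}\cap N\cap Nm_R^{nb}$ lies in $(I^n)^{\sat}\cap m_R^{nb}=I^n\cap m_R^{nb}\subset I^n$. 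This pins $\ell_R(M_n)\le \ell_R(N/Nm_R^{nb})$, and the right-hand side grows like $n^{\dim N}$ with $\dim N<d$, which kills the limit. So the resolution of the obstacle you flagged is not a module-theoretic version of linear boundedness at all, but a direct and more economical use of the ring-theoretic version.
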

\begin{proof}By Theorem 3.4 \cite{S1}, there exists $b\in\mathbb{Z}_{>0}$ such that $(I^n)^{\sat}\cap m_R^{nb} = I^n\cap m_R^{nb}$ for all $n$.
    For each $n$, we have the natural surjection
    \begin{equation*}
        \frac{(I^n)^{\sat}\cap N}{I^n\cap Nm_R^{nb}}\to 
        \frac{((I^n)^{\sat}\cap N)/(I^n\cap Nm_R^{nb})}{(I^n\cap N)/(I^n\cap Nm_R^{nb})} \cong
        \frac{(I^n)^{\sat}\cap N}{I^n\cap N}
    \end{equation*}and the natural map
    \begin{equation*}
         \frac{(I^n)^{\sat}\cap N}{I^n\cap Nm_R^{nb}}\to
         \frac{N}{Nm_R^{nb}}
    \end{equation*} which is injective since $(I^n)^{\sat}\cap m_R^{nc} = I^n\cap m_R^{nc}$. Thus
    \begin{equation*}
        \ell_R\bigg(\frac{(I^n)^{\sat}\cap N}{I^n\cap N}\bigg)
        \leq \ell_R\bigg(\frac{(I^n)^{\sat}\cap N}{I^n\cap Nm_R^{nb}}\bigg)
        \leq \ell_R\bigg(\frac{N}{Nm_R^{nb}}\bigg)
    \end{equation*}for all $n$, which completes the proof of the lemma, since $\dim_R(N)<d$.
\end{proof}

We have an exact sequence
\begin{equation*}
    0\to \frac{(I^n+N)\cap [(I^n)^{\sat}]}{I^n}\to \frac{(I^n)^{\sat}}{I^n}\to \frac{(I^n)^{\sat}+N}{I^n+N}\to 0
\end{equation*}while using Lemma \ref{lem2-1}
\begin{equation*}
    \frac{(I^n+N)\cap [(I^n)^{\sat}]}{I^n}
    = \frac{I^n + ((I^n)^{\sat}\cap N)}{I^n}
    \cong\frac{(I^n)^{\sat}\cap N}{((I^n)^{\sat}\cap N)\cap I^n}
    =\frac{(I^n)^{\sat}\cap N}{I^n\cap N}.
\end{equation*}Hence
\begin{equation*}
    0\leq \frac{\ell_R((I^n)^{\sat}/I^n)}{n^d}-\frac{\ell_R(((I^n)^{\sat}+N)/(I^n+N))}{n^d} =\frac{\ell_R([(I^n)^{\sat}\cap N]/[I^n\cap N])}{n^d}.
\end{equation*}This combined with Lemma \ref{lem1} yields the following statement.

\begin{Remark}\label{rmk2}
    $\lim_{n\to\infty}\frac{\ell_R((I^n)^{\sat}/I^n)}{n^d}$ exists if and only if $\lim_{n\to\infty}\frac{\ell_R(((I^n)^{\sat}+N)/(I^n+N))}{n^d}$ exists, in which case these two limits are equal.
\end{Remark}

Our next goal is to show that $\lim_{n\to\infty}\frac{\ell_R(((I^n)^{\sat}+N)/(I^n+N))}{n^d}$ exists. By Corollary 6.3 \cite{C2}, we have that

\begin{equation}\label{eq1}
    \begin{split}
        \frac{1}{d!}\varepsilon(I(R/N))
    &= \frac{1}{d!}\varepsilon((I+N)/N)
    =\lim_{n\to\infty}
    \frac{\ell_R(([(I+N)/N]^n)^{\sat}/[(I+N)/N]^n)}{n^d}
    \\&=\lim_{n\to\infty}
    \frac{\ell_R([(I^n+N)^{\sat}/N]/[(I^n+N)/N])}{n^d}
    \\&=\lim_{n\to\infty}
    \frac{\ell_R([(I^n+N)^{\sat}]/[(I^n+N)])}{n^d}
    \end{split}
\end{equation}exists (the third equation holds by Remark \ref{rmk2.1}). On the other hand, we have an exact sequence,
\begin{equation}\label{eq2}
    0\to \frac{(I^n)^{\sat}+N}{I^n+N}
    \to \frac{(I^n+N)^{\sat}}{I^n+N}
    \to \frac{(I^n+N)^{\sat}}{(I^n)^{\sat}+N}\to 0.
\end{equation} 

We will proceed by proving the following statement.

\begin{Proposition}\label{prop1}
    The limit
    \begin{equation*}
        \lim_{n\to\infty}\frac{\ell_R([(I^n+N)^{\sat}]/[(I^n)^{\sat}+N])}{n^d}
    \end{equation*}exists.
\end{Proposition}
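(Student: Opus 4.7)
The plan is to realize the length $\ell_R((I^n+N)^{\sat}/((I^n)^{\sat}+N))$ as controlled by the difference of colengths of two graded families of $m_R$-primary ideals of $R$, and then to apply Theorem~6.1 of \cite{C2} to each family. Setting
\[
K_n := (I^n+N)^{\sat},\qquad J_n := (I^n)^{\sat}+N,
\]
one first verifies that $\{K_n\}$ and $\{J_n\}$ are graded families of ideals of $R$ with $J_n \subset K_n$ (a product of two $m_R$-saturated ideals is contained in the saturation of their product), and that $K_n/J_n$ has finite length, being a quotient of $K_n/(I^n+N) = H^0_{m_R}(R/(I^n+N))$.

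The key preliminary step is a Swanson-type linear bound carried out in the reduced ring $\ovl{R} := R/N$. Applying Swanson's Theorem~3.4 of \cite{S1} to the ideal $\ovl{I} := I\ovl{R}$ produces an integer $b>0$ with
\[
(\ovl{I}^n)^{\sat} \cap m_{\ovl{R}}^{nb} \ = \ \ovl{I}^n \cap m_{\ovl{R}}^{nb}\qquad\text{for every }n,
\]
the saturation being taken in $\ovl{R}$. Using Remark~\ref{rmk2.1}(i) to identify $(\ovl{I}^n)^{\sat}$ with $K_n/N$, and lifting through the projection $R \to \ovl{R}$, one obtains the inclusion $K_n \cap m_R^{nb} \subset I^n+N \subset J_n$ for every $n$. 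Lemma~\ref{lem2-1} (applied with $I = K_n \supset J_n$ and $K = m_R^{nb}$) then gives
\[
K_n \cap (J_n + m_R^{nb}) \ =\ J_n + (K_n \cap m_R^{nb}) \ =\ J_n,
\]
and the second isomorphism theorem produces
\[
\frac{K_n + m_R^{nb}}{J_n + m_R^{nb}} \ \cong\ \frac{K_n}{J_n} \ =\ \frac{(I^n+N)^{\sat}}{(I^n)^{\sat}+N}.
\]

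At this stage I would invoke Lemma~\ref{lem2}, announced in the introduction as the step in which Swanson's theorem is leveraged to exhibit families satisfying the hypothesis of Theorem~6.1 of \cite{C2}. The ideals $K_n + m_R^{nb}$ and $J_n + m_R^{nb}$ are $m_R$-primary (they contain $m_R^{nb}$), but checking that they assemble into \emph{graded} families is subtle: a product such as $K_m \cdot m_R^{nb}$ need not lie in $K_{m+n} + m_R^{(m+n)b}$. Lemma~\ref{lem2} is precisely the device that resolves this technicality by constructing appropriate graded $m_R$-primary families to which Theorem~6.1 of \cite{C2} applies. Once those are in hand, Theorem~6.1 (using the standing hypothesis $\dim_R N < d$ together with completeness of $R$) yields existence of each individual polynomial limit $\lim \ell_R(R/\,\cdot\,)/n^d$, and their difference gives existence of the desired limit for $\ell_R((I^n+N)^{\sat}/((I^n)^{\sat}+N))/n^d$. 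The subtlety about gradedness just described is in my view the main obstacle of the proof.
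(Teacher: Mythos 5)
Your starting moves are on target: you correctly identify that the argument should run through Swanson's Theorem 3.4 of \cite{S1} applied to $\ovl{I}=I\ovl{R}$ in the reduced ring $\ovl{R}=R/N$, and that the goal is to land in a position where Theorem 6.1 of \cite{C2} applies. Your isomorphism $(K_n+m_R^{nb})/(J_n+m_R^{nb})\cong K_n/J_n$ (via Lemma \ref{lem2-1} and the second isomorphism theorem) is correct as far as it goes. But the route you then take — passing to the $m_R$-primary ideals $K_n+m_R^{nb}$, $J_n+m_R^{nb}$ and hoping to realize the target length as a \emph{difference of colengths} — is not what Theorem 6.1 of \cite{C2} does, and it leads you into the gradedness obstruction you yourself flag. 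That obstruction is genuine, and the claim that Lemma \ref{lem2} ``resolves this technicality by constructing appropriate graded $m_R$-primary families'' is wrong: Lemma \ref{lem2} does not produce any $m_R$-primary families. Lemma \ref{lem2} is purely the Artin--Rees statement $(I^n+N)^{\sat}\cap m^{nc}=((I^n)^{\sat}+N)\cap m^{nc}$, proved by lifting (\ref{eq3}) from $\ovl{R}$ to $R$ with Lemma \ref{lem2-1} and Remark \ref{rmk2.1}.

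The point you are missing is the shape of Theorem 6.1 of \cite{C2}. It is a statement about \emph{filtrations}, not about $m_R$-primary graded families or Hilbert--Samuel colengths: if $\ovl{R}$ is analytically unramified of dimension $d>0$ and $\{I_n\}\subset\{J_n\}$ are filtrations of ideals of $\ovl{R}$ with $I_n\cap m_{\ovl R}^{cn}=J_n\cap m_{\ovl R}^{cn}$ for some fixed $c$ and all $n$, then $\lim_n \ell_{\ovl R}(J_n/I_n)/n^d$ exists directly. So one should apply it to $I_n=((I^n)^{\sat}+N)/N$ and $J_n=(I^n+N)^{\sat}/N$ as ideals of $\ovl{R}$, with no modification by $m_R^{nb}$ at all. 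The work then consists of three checks: (a) $\dim\ovl R=d$ and $\ovl R$ is analytically unramified (Lemma \ref{lem4}, using that $R$ is complete and $N$ is its nilradical); (b) $\{I_n\}$ and $\{J_n\}$ are filtrations (Lemma \ref{lem3}; the gradedness of $\{J'_n=(I^n+N)^{\sat}\}$ is verified there by a direct colon-ideal computation, and this is where your worry about gradedness is actually settled — for the \emph{right} families); and (c) the Artin--Rees intersection property $J_n\cap m_{\ovl R}^{nc}=I_n\cap m_{\ovl R}^{nc}$, which is Lemma \ref{lem2} transported to $\ovl R$ in Remark \ref{rmk3}. With (a)--(c) in hand, Theorem 6.1 gives the existence of $\lim_n\ell_{\ovl R}(J_n/I_n)/n^d=\lim_n\ell_R\big((I^n+N)^{\sat}/((I^n)^{\sat}+N)\big)/n^d$ in one step, with no difference of colengths and no $m_R$-primary reduction. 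As written, your proposal has a genuine gap at exactly the step you singled out as ``the main obstacle''; it is not resolvable in the form you propose, but it evaporates once Theorem 6.1 is applied in its intended generality.
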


In order to prove Proposition \ref{prop1} we will first prove several lemmas (Lemma \ref{lem2}, Remark \ref{rmk3}, Lemma \ref{lem3}, and Lemma \ref{lem4}). Let $m$ be the maximal ideal of $R$ and let $\overline{R} = R/N$. In addition, let $\Tilde{I} = I(R/N)$, which equals $(I+N)/N$.

\begin{Lemma}\label{lem2}
    There exists $c\in\mathbb{Z}_{>0}$ such that
    \begin{equation*}
        ((I^n+N)^{\sat})\cap m^{nc}
        = ((I^n)^{\sat}+N)\cap m^{nc}
    \end{equation*}for all $n\geq 0$.
\end{Lemma}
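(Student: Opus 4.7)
The plan is to reduce the claim to Swanson's Theorem 3.4 \cite{S1} applied inside the Noetherian local ring $\ovl{R} = R/N$, whose maximal ideal is $m\ovl{R}$. Writing $\tilde I = (I+N)/N$ as above, Swanson's theorem will produce a positive integer $c$ such that
$$(\tilde I^n)^{\sat} \cap (m\ovl{R})^{nc} \subset \tilde I^n$$
holds in $\ovl R$ for every $n \geq 1$ (and trivially for $n = 0$). My claim is that this same $c$ witnesses the asserted equality back in $R$.

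Next, I would translate the inclusion in $\ovl R$ up to $R$ using Remark~\ref{rmk2.1}. Part (ii) identifies $\tilde I^n$ with $(I^n+N)/N$, part (i) identifies $(\tilde I^n)^{\sat}$ with $(I^n+N)^{\sat}/N$, and $(m\ovl R)^{nc}$ is just $(m^{nc}+N)/N$. Part (iii), applied with $G = N$ (which is contained in both $(I^n+N)^{\sat}$ and $m^{nc}+N$), turns the intersection computed in $\ovl R$ into the image modulo $N$ of $(I^n+N)^{\sat} \cap (m^{nc}+N)$. Lifting Swanson's inclusion therefore yields
$$(I^n+N)^{\sat} \cap (m^{nc}+N) \subset I^n + N.$$
Since $m^{nc} \subset m^{nc}+N$ on the left and $I^n+N \subset (I^n)^{\sat}+N$ on the right, intersecting with $m^{nc}$ gives the nontrivial inclusion
$$(I^n+N)^{\sat} \cap m^{nc} \subset \bigl((I^n)^{\sat}+N\bigr) \cap m^{nc}.$$

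The reverse inclusion will be formal: $I^n \subset I^n+N$ gives $(I^n)^{\sat} \subset (I^n+N)^{\sat}$, and $N \subset I^n+N \subset (I^n+N)^{\sat}$ directly, so $(I^n)^{\sat}+N \subset (I^n+N)^{\sat}$, and intersecting both sides with $m^{nc}$ finishes the argument. I do not anticipate any serious obstacle; the only step requiring care is the bookkeeping through Remark~\ref{rmk2.1}(iii), where I need to check $N \subset (I^n+N)^{\sat} \cap (m^{nc}+N)$ in order to identify the intersection in $\ovl R$ with the reduction of an intersection in $R$. All the real content of the lemma is packaged inside Swanson's theorem applied to the data $(\ovl R,\, m\ovl R,\, \tilde I)$.
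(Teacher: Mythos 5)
Your proof is correct and follows essentially the same route as the paper: apply Swanson's Theorem 3.4 in $\overline{R}=R/N$, translate via Remark~\ref{rmk2.1} to get $(I^n+N)^{\sat}\cap(m^{nc}+N)\subset I^n+N$ in $R$, then finish by intersecting with $m^{nc}$ and noting the trivial reverse inclusion $(I^n)^{\sat}+N\subset(I^n+N)^{\sat}$. The only (harmless) deviation is that by intersecting directly with $m^{nc}$ you avoid the paper's intermediate appeal to Lemma~\ref{lem2-1} to decompose $(I^n+N)\cap(m^{nc}+N)$, a step that in fact is not needed to obtain the desired inclusion.
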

\begin{proof}
    By Theorem 3.4 \cite{S1}, there exists $c>0$ such that
    \begin{equation}\label{eq3}
        ((\Tilde{I}^n)^{\sat})\cap m_{\overline{R}}^{cn}
        = (\Tilde{I}^n)\cap m_{\overline{R}}^{cn}
    \end{equation}for all $n$. We have the following equations by Remark \ref{rmk2.1}
    
        \begin{equation}\label{eq4.1}
            m_{\overline{R}}^{cn}
            = ((m+N)/N)^{cn}
            =(m^{cn}+N)/N
        \end{equation}

        \begin{equation}\label{eq4.2}
            \Tilde{I}^n
            = \bigg(\frac{I+N}{N}\bigg)^n
            =\frac{I^n+N}{N}
        \end{equation}

        \begin{equation}\label{eq4.3}
            \big(\big(\Tilde{I}^n\big)^{\sat}\big)
            = \bigg(\bigg(\frac{I+N}{N} \bigg)^n\bigg)^{\sat}
            = \frac{(I^n+N)^{\sat}}{N}.
        \end{equation}

        By equations (\ref{eq3}), (\ref{eq4.1}), (\ref{eq4.2}), and (\ref{eq4.3}) we have
        \begin{equation}\label{eq4.4}
            \frac{(I^n+N)^{\sat}}{N}\cap \frac{m^{cn}+N}{N}
            =\frac{I^n+N}{N} \cap \frac{m^{cn}+N}{N}
        \end{equation}for all $n$.
        By equation (\ref{eq4.4}) and Remark \ref{rmk2.1} (iii) we have
        \begin{equation*}
            \frac{(I^n+N)^{\sat}\cap (m^{cn}+N)}{N}
            =\frac{(I^n+N)\cap (m^{cn}+N)}{N}.
        \end{equation*} for all $n$. By Lemma \ref{lem2-1},
        \begin{equation*}
            \begin{split}
                (I^n+N)^{\sat}\cap (m^{cn}+N)
            &= (I^n+N)\cap (m^{cn}+N)\\
            &= [(I^n+N)\cap m^{cn}]
            + [(I^n+N)\cap N]
            \\&=[(I^n+N)\cap m^{cn}] + N
            \end{split}
        \end{equation*} for all $n$. Consequently
        \begin{equation*}
        \begin{split}
            (I^n+N)^{\sat} \cap m^{cn}
            &\subset (I^n+N)^{\sat}\cap (m^{cn}+N)\\
            &= [(I^n+N)\cap m^{cn}] + N
            \\& \subset I^n + N \subset (I^n)^{\sat} + N
        \end{split}
        \end{equation*} so that
        \begin{equation}\label{eq4.5}
            (I^n+N)^{\sat} \cap m^{cn}
            \subset [(I^n)^{\sat} + N]\cap m^{cn}
        \end{equation} for all $n$. On the other hand, $(I^n)^{\sat} + N\subset (I^n+N)^{\sat}$ implies that
        \begin{equation}\label{eq 4.6}
            [(I^n)^{\sat} + N]\cap m^{cn}
            \subset (I^n+N)^{\sat} \cap m^{cn}
        \end{equation} for all $n$. Equations (\ref{eq4.5}) and (\ref{eq 4.6}) together yield
        \begin{equation*}
            [(I^n)^{\sat} + N]\cap m^{cn}
            = (I^n+N)^{\sat} \cap m^{cn}
        \end{equation*}for all $n$, which finishes the proof of the lemma.
\end{proof}

Now fix a positive number $c$ as in Lemma \ref{lem2}. We will define families of ideals $I_n$ and $J_n$ in $\overline{R}$ and show that $I_n\cap m_{\overline{R}}^{cn} = J_n\cap m_{\overline{R}}^{cn}$ for all $n$. From there we will prove Proposition \ref{prop1}.\\

Let
\begin{equation}
    \begin{split}
        J'_n = (I^n+N)^{\sat}&, I'_n = (I^n)^{\sat}+N,\\
        J_n = J'_n/N, &\text{ and } I_n = I'_n/N
    \end{split}
\end{equation}for $n\geq 0$. We have that $J_0=R=I_0$ (since $R^{\sat}=R$). Lemma \ref{lem2} states that $J'_n\cap m^{nc}=I'_n\cap m^{nc}$ for all $n$. Now we will prove the following remark.

\begin{Remark}\label{rmk3}
    $J_n\cap m_{\overline{R}}^{nc}= I_n\cap m_{\overline{R}}^{nc}$ for all $n$
\end{Remark}

\begin{proof}
Since $J'_n,\textbf{ } I'_n\supset N$, we have the following equations for all $n$
\begin{equation}\label{eq4.7}
    \begin{split}
        J'_n\cap (m^{nc}+N)
        &= (J'_n\cap m^{nc}) + (J'_n\cap N)
        = (J'_n\cap m^{nc})+N\\
        =(I'_n\cap m^{nc}) + N
        &= (I'_n\cap m^{nc}) + (I'_n\cap N)
        = I'_n\cap (m^{nc} + N).
    \end{split}
\end{equation}
Equation (\ref{eq4.7}) together with Remark \ref{rmk2.1} (ii) and (iii) yield that
\begin{equation*}
    \begin{split}
        \frac{J'_n\cap (m^{nc}+N)}{N}
        &= \frac{I'_n\cap (m^{nc} + N)}{N}\\
        (J'_n/N) \cap ((m^{nc}+N)/N)
        &= (I'_n/N) \cap ((m^{nc}+N)/N)\\
        J_n \cap [((m+N)/N)^{nc}]
        &= I_n \cap [((m+N)/N)^{nc}]\\
        J_n\cap m_{\overline{R}}^{nc}
        &= I_n\cap m_{\overline{R}}^{nc}
    \end{split}
\end{equation*}for all $n$.
\end{proof}

\begin{Lemma}\label{lem3}
    $\{I_n\}$ and $\{J_n\}$ are filtrations in $\overline{R}$.
\end{Lemma}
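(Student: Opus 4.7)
The plan is to verify directly that both $\{I'_n\}$ and $\{J'_n\}$ are graded families of ideals in $R$ containing $N$, and then pass to the quotient by $N$ to obtain the desired conclusion in $\overline{R}$. The degree-zero condition $I_0 = J_0 = \overline{R}$ is already noted in the text (since $R^{\sat} = R$), so the content is the multiplicativity condition $I'_m I'_n \subset I'_{m+n}$ and $J'_m J'_n \subset J'_{m+n}$.

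The key workhorse is the standard observation that the saturation operation is submultiplicative: if $A, B$ are ideals of $R$ and $x m^s \subset A$, $y m^t \subset B$, then $xy\, m^{s+t} \subset AB$, so $A^{\sat} B^{\sat} \subset (AB)^{\sat}$. For the family $\{J'_n\}$, this together with the inclusion $(I^m+N)(I^n+N) \subset I^{m+n}+N$ and monotonicity of saturation gives
\begin{equation*}
J'_m J'_n = (I^m+N)^{\sat}(I^n+N)^{\sat} \subset \bigl((I^m+N)(I^n+N)\bigr)^{\sat} \subset (I^{m+n}+N)^{\sat} = J'_{m+n}.
\end{equation*}
For the family $\{I'_n\}$, I would expand
\begin{equation*}
I'_m I'_n = \bigl((I^m)^{\sat}+N\bigr)\bigl((I^n)^{\sat}+N\bigr) \subset (I^m)^{\sat}(I^n)^{\sat} + N,
\end{equation*}
using that every cross term involves $N$ and hence lies in $N$. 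Applying submultiplicativity of saturation again, $(I^m)^{\sat}(I^n)^{\sat} \subset (I^m I^n)^{\sat} = (I^{m+n})^{\sat}$, so the right-hand side is contained in $(I^{m+n})^{\sat} + N = I'_{m+n}$.

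Having established $J'_m J'_n \subset J'_{m+n}$ and $I'_m I'_n \subset I'_{m+n}$ in $R$, I would pass to $\overline{R} = R/N$. Since each $J'_n$ and $I'_n$ contains $N$, the quotient ideals $J_n = J'_n/N$ and $I_n = I'_n/N$ are well-defined ideals of $\overline{R}$, and the multiplicativity inclusions descend to $J_m J_n \subset J_{m+n}$ and $I_m I_n \subset I_{m+n}$. Combined with $I_0 = J_0 = \overline{R}$, this proves that $\{I_n\}$ and $\{J_n\}$ are filtrations (graded families) in $\overline{R}$.

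There is no real obstacle here: the proof is a bookkeeping exercise built on the submultiplicativity of saturation. The only point that requires a moment of care is that cross terms of the form $(I^m)^{\sat}\cdot N$ lie in $N$, so that the expansion of $I'_m I'_n$ really does collapse into the desired form; this is immediate because $N$ is an ideal.
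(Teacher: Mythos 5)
Your proposal is correct and takes essentially the same approach as the paper: the paper's proof of the $\{J_n\}$ case is a hands-on version of exactly your submultiplicativity argument (it picks $t$ with $J'_i=(I^i+N):m^t$, etc., notes $m^{2t}$ is generated by products $y_1y_2$ with $y_1,y_2\in m^t$, and shows $ab\,y_1y_2\in(I^i+N)(I^j+N)\subset I^{i+j}+N$), and for $\{I_n\}$ the paper just cites that $\{(I^n)^{\sat}\}$ is already a graded family and passes to $\overline{R}$, which is the same content as your expansion $I'_mI'_n\subset (I^m)^{\sat}(I^n)^{\sat}+N\subset (I^{m+n})^{\sat}+N$. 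Abstracting the common step as the lemma $A^{\sat}B^{\sat}\subset (AB)^{\sat}$ is a modest streamlining, not a different route.
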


\begin{proof}
    Since $\{(I^n)^{\sat}\}$ is a filtration, we have that $\{I_n = (I^n)^{\sat}\overline{R}\}$ is a filtration.\\

    Now we will prove that $\{J_n\}$ is a filtration. Recall that $J_n = J'_n/N$ for each $n$. To see that $\{J_n\}$ is a filtration, it is enough prove that $\{J'_n = (I^n+N)^{\sat}\}$ is a graded family (the rest of the argument is similar to the above paragraph). Fix $i,j\geq 0$, $a\in (I^i+N)^{\sat}$, and $b\in (I^j+N)^{\sat}$. Let $t\in\mathbb{Z}_{>0}$ be so large that the equations
    \begin{equation*}
        \begin{split}
            (I^{i+j}+N)^{\sat}&= (I^{i+j}+N):m^{t'}\\
            (I^{i}+N)^{\sat}&= (I^{i}+N):m^{t}\\
            (I^{j}+N)^{\sat}&= (I^{j}+N):m^{t}\\
        \end{split}
    \end{equation*}hold for $t'\geq t$. We have
    \begin{equation}\label{eq5.1}
        m^{2t} = (y_1y_2\mid y_1,y_2\in m^t).
    \end{equation} For $y_1,y_2\in m^t$, we have
    \begin{equation*}
        \begin{split}
            aby_1y_2
            = (ay_1)(by_2)
            \in (I^i+N)(I^j+N)\subset (I^{i+j}+N).
        \end{split}
    \end{equation*}Now by (\ref{eq5.1}) we see that $ab\in (I^{i+j}+N):m^{2t} = (I^{i+j}+N)^{\sat}$, so that $J'_iJ'_j\subset J'_{i+j}$. Thus $\{J'_n\}$ is a graded family. Therefore, $\{J_n=J'_n/N\}$ is a graded family, and hence a filtration. This finishes the proof of the lemma.
\end{proof}

\begin{Lemma}\label{lem4}
    $\dim\overline{R} = d$ since every prime ideal of $R$ contains $N$.
\end{Lemma}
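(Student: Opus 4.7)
The plan is to invoke the standard fact that the nilradical $N$ of any commutative ring is contained in every prime ideal of $R$ (since every element of $N$ is nilpotent, and prime ideals are radical). The hint provided in the lemma's own statement already tells us exactly which fact to use.

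First, I would note that by the ideal correspondence theorem, primes of $\overline{R} = R/N$ are in inclusion-preserving bijection with primes $P$ of $R$ satisfying $P \supset N$. Since every prime of $R$ contains $N$, this bijection is between all primes of $R$ and all primes of $\overline{R}$.

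Then I would observe that a chain of primes $P_0 \subsetneq P_1 \subsetneq \cdots \subsetneq P_d$ in $R$ (of maximal length, witnessing $\dim R = d$) descends to a chain $P_0/N \subsetneq P_1/N \subsetneq \cdots \subsetneq P_d/N$ in $\overline{R}$, and conversely any chain in $\overline{R}$ lifts to a chain in $R$. Hence the supremum of lengths of chains is the same in $R$ and $\overline{R}$, so $\dim \overline{R} = \dim R = d$.

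There is no real obstacle here; this is a one-line fact from basic commutative algebra. The only subtlety is bookkeeping the correspondence, but this is immediate.
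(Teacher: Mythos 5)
Your argument is correct and matches the paper's intent exactly; the paper simply states this lemma without a separate proof block, treating the fact that $N$ lies in every prime (hence that the prime spectra and chains of primes of $R$ and $R/N$ correspond bijectively) as immediate. Nothing further is needed.
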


Now we are ready to prove Proposition \ref{prop1}.

\begin{proof}
    By Lemma \ref{lem4}, $\overline{R}$ is an analytically unramified ring of dimension $d>0$. Now by Remark \ref{rmk3}, Lemma \ref{lem3}, and Theorem 6.1 \cite{C2}, the limit
    \begin{equation*}
        \lim_{n\to\infty}\frac{\ell_{\overline{R}}(J_n/I_n)}{n^d}
        =\lim_{n\to\infty}\frac{\ell_R((I^n+N)^{\sat}/((I^n)^{\sat})+N)}{n^d}
    \end{equation*}exists. So Proposition \ref{prop1} is proven.
\end{proof}

Now we will prove Theorem \ref{thm1}.

\begin{proof}
    By the exact sequence (\ref{eq2}) we have
\begin{equation*}
    \ell_R([(I^n)^{\sat}+N]/[I^n+N])
    = \ell_R([(I^n+N)^{\sat}]/[I^n+N])
    -\ell_R([(I^n+N)^{\sat}]/[(I^n)^{\sat}+N])
\end{equation*}for all $n$. Theorem \ref{thm1} now follows from the existence of the limit in Equation (\ref{eq1}), Proposition \ref{prop1}, and Remark \ref{rmk2}.
\end{proof}

\begin{Lemma}\label{completionLemma}
    Suppose that $R$ is a Noetherian local ring with maximal ideal $m$, and $M$ is a finitely generated Artin $R$-module. Let $\widehat{M}$ be the $m$-adic completion of $M$. Then the map
    \begin{equation*}
        \begin{split}
            M&\to \widehat{M} = \Big\{(a_1+mM,\ldots )\in \prod_{n\geq 1}M/m^nM\mid a_j-a_i\in m^iM \text{ for $j>i\geq 1$}\Big\}\\
            a&\mapsto (a+mM,a+m^2M,\ldots)
        \end{split}
    \end{equation*} is an isomorphism of $R$-modules.
\end{Lemma}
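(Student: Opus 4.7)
The plan is to reduce the completion to $M$ itself by showing that a power of $m$ annihilates $M$.

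First, I would exploit the Artinian hypothesis on $M$: the descending chain of submodules
\[
M\supseteq mM\supseteq m^2M\supseteq\cdots
\]
must stabilize, so there exists $n\in\mathbb{Z}_{>0}$ with $m^nM=m^{n+1}M=m\cdot(m^nM)$. Since $M$, and hence $m^nM$, is finitely generated over the Noetherian local ring $(R,m)$, Nakayama's lemma forces $m^nM=0$.

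Next, for every $k\geq n$ the natural surjection $M\twoheadrightarrow M/m^kM$ is an isomorphism, and the transition maps $M/m^{k+1}M\to M/m^kM$ in the inverse system are identities for $k\geq n$. Consequently the inverse limit $\widehat{M}=\varprojlim M/m^kM$ is canonically identified with $M$. Chasing the description in the statement, an element $a\in M$ maps to $(a+mM,a+m^2M,\ldots)$, and conversely any coherent sequence $(a_1,a_2,\ldots)$ is determined by its $k$-th component for any $k\geq n$, which lies in $M/m^kM=M$ and lifts uniquely back to $M$. Injectivity thus follows from $\bigcap_k m^kM\subseteq m^nM=0$ and surjectivity from the stabilization of the inverse system.

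There is essentially no obstacle here: the only substantive point is recognizing that ``finitely generated Artin $R$-module'' forces $M$ to have finite length and hence to be annihilated by some $m^n$, after which the claim is an immediate unwinding of the definition of $\widehat{M}$.
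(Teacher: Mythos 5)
Your proof is correct and follows essentially the same route as the paper: both use the stabilization of the descending chain $m^iM$ to conclude that some power of $m$ kills $M$, and then observe that the inverse system becomes eventually constant so the completion map is an isomorphism. The only cosmetic difference is that you deduce $m^nM=0$ from $m^nM=m\cdot m^nM$ via Nakayama, whereas the paper invokes Krull's intersection theorem to get $\bigcap_n m^nM=0$ and then uses stabilization; these are interchangeable here.
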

\begin{proof}Let $t\in\mathbb{Z}_{>0}$ be large enough that $m^tM = m^iM$ for $i\geq t$. By Krull's intersection theorem, $\cap_{n\geq 1}m^nM=0$, so that $0 = m^tM = m^iM$ for $i\geq t$. Let\\ $\phi:M\to  \widehat{M}$ be the $R$-module homomorphism defined by\\ $a\mapsto (a+mM,a+m^2M,\ldots)$ for $a\in M$. If $a\in M$ and $\phi(a) = 0$, then\\ $a\in \cap_{n\geq 0}m^nM=0$ so that $a = 0$. Thus $\phi$ is injective. Suppose that\\ $\alpha = (a_i+m^iM)_{i\geq 1}\in \widehat{M}$. Then $a_i-a_t\in m^tM=0$ for $i\geq t$ so that $a_i=a_t$ for $i\geq t$. Further, $a_i - a_t\in m^iM$ for $i\leq t$ so that $\phi(a_t) = \alpha$. Thus $\phi$ is surjective and so $\phi$ is an isomorphism.
\end{proof}

Now we prove Corollary \ref{thm2}, which is the case of Theorem \ref{TheoremA} when $d>0$.

\begin{Corollary}\label{thm2}
    Let $R$ be a Noetherian local ring of dimension $d>0$ such that the nildradical of $\widehat{R}$ has dimension less than $d$. Let $I\subset R$ be any ideal. Then the limit
    \begin{equation*}
        \lim_{n\to\infty}\frac{\ell_R((I^n)^{\sat}/I^n)}{n^d}
    \end{equation*}exists.
\end{Corollary}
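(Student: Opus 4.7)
The plan is to reduce Corollary \ref{thm2} to Theorem \ref{thm1} by passing to the $m_R$-adic completion $\widehat{R}$. Since $\dim R = \dim\widehat{R} = d$ and by hypothesis $\dim N(\widehat{R}) < d$, Theorem \ref{thm1} applies to the complete local ring $\widehat{R}$ and the ideal $I\widehat{R}$; using $(I\widehat{R})^n = I^n\widehat{R}$, this gives the existence of
\[
\lim_{n\to\infty}\frac{\ell_{\widehat{R}}((I^n\widehat{R})^{\sat}/I^n\widehat{R})}{n^d}.
\]
So it will suffice to prove, for every $n \geq 0$, the length identity
\[
\ell_R((I^n)^{\sat}/I^n) = \ell_{\widehat{R}}((I^n\widehat{R})^{\sat}/I^n\widehat{R}).
\]

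First I would check that $(I^n)^{\sat}/I^n$ has finite length over $R$. Since $R$ is Noetherian the ascending chain $\{I^n : m_R^i\}_{i \geq 0}$ stabilizes, so for each $n$ there exists $k = k(n)$ with $(I^n)^{\sat} = I^n : m_R^k$. Then $m_R^k$ annihilates the finitely generated module $(I^n)^{\sat}/I^n$, so it has finite length. Because $R \to \widehat{R}$ is faithfully flat and $\widehat{R}/m_R\widehat{R} = R/m_R$, any composition series of $(I^n)^{\sat}/I^n$ tensored with $\widehat{R}$ is a composition series over $\widehat{R}$ of the same length; equivalently, by Lemma \ref{completionLemma} this module equals its own completion. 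Hence $\ell_R((I^n)^{\sat}/I^n) = \ell_{\widehat{R}}(((I^n)^{\sat}/I^n) \otimes_R \widehat{R})$.

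The main step, which I expect to be the principal obstacle, is to identify $((I^n)^{\sat}/I^n) \otimes_R \widehat{R}$ with $(I^n\widehat{R})^{\sat}/I^n\widehat{R}$, which amounts to proving $(I^n)^{\sat}\widehat{R} = (I^n\widehat{R})^{\sat}$. Flatness of $\widehat{R}$ over $R$ together with finite generation of $m_R^k$ gives $(I^n : m_R^k)\widehat{R} = I^n\widehat{R} : m_R^k\widehat{R} = I^n\widehat{R} : m_{\widehat{R}}^k$. Because the chain $\{I^n : m_R^i\}_i$ is constant for $i \geq k$, its extension $\{(I^n : m_R^i)\widehat{R}\}_i$ is constant for $i \geq k$, and the flatness identity just displayed transports this to constancy of $\{I^n\widehat{R} : m_{\widehat{R}}^i\}_i$ for $i \geq k$. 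Therefore $(I^n\widehat{R})^{\sat} = I^n\widehat{R} : m_{\widehat{R}}^k = (I^n)^{\sat}\widehat{R}$, and tensoring $0 \to I^n \to (I^n)^{\sat} \to (I^n)^{\sat}/I^n \to 0$ with the flat $R$-module $\widehat{R}$ gives the desired isomorphism.

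Combining the two identifications yields $\ell_R((I^n)^{\sat}/I^n) = \ell_{\widehat{R}}((I^n\widehat{R})^{\sat}/I^n\widehat{R})$, so dividing by $n^d$ and invoking Theorem \ref{thm1} establishes existence of the limit in the corollary. The subtlety is concentrated in the commutation of saturation with extension to the completion; once the Noetherian stabilization level $k$ of the colon chain is seen to be preserved under the faithfully flat map $R \to \widehat{R}$, the remaining length comparisons are standard.
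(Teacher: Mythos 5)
Your proposal is correct and follows essentially the same route as the paper: reduce to the complete case via Theorem \ref{thm1}, establish $(I^n)^{\sat}\widehat R=(I^n\widehat R)^{\sat}$ by flatness and stabilization of the colon chain, and then compare lengths using the fact that $(I^n)^{\sat}/I^n$ is Artinian (hence isomorphic to its completion via Lemma \ref{completionLemma}). The only cosmetic difference is that the paper invokes Noetherianity of $\widehat R$ to get stabilization of the colon chain directly in $\widehat R$, whereas you transport the stabilization level $k$ from $R$ to $\widehat R$ through the flatness identity; both are valid.
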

\begin{proof}
There exists $t\in\mathbb{Z}_{>0}$ such that
\begin{equation*}
    (I^n)^{\sat} = I^n:m_R^t\text{ and }(I^n\widehat{R})^{\sat}
    = I^n\widehat{R}:m_{\widehat{R}}^t
\end{equation*}since $R$ and $\widehat{R}$ are Noetherian. By flatness of $R\to\widehat{R}$ we now have
\begin{equation}\label{eq5.2}
    ([I\widehat{R}]^n)^{\sat}
     = (I^n\widehat{R})^{\sat} 
     = I^n\widehat{R}: m_R^t\widehat{R}
     = (I^n:m_R^t)\widehat{R}
     = (I^n)^{\sat}\widehat{R}.
\end{equation}

We will now show that the $R$-modules $M:= (I^n)^{\sat}/I^n$ and $[(I\widehat{R})^n]^{\sat}/(I\widehat{R})^n$ are isomorphic. Fix $n\in\mathbb{N}$. There exists $C\in \mathbb{Z}_{>0}$ such that $(I^n)^{\sat}\cap m_R^C = I^n\cap m_R^C$, and so $m_R^C((I^n)^{\sat}/I^n)=0$. Then $M$ is an $R/m_R^C$-module, and is hence a finitely generated Artin $R$-module. By Lemma \ref{completionLemma} the map $M\to \widehat{M}$ is an $R$-module isomorphism, where $\widehat{M}$ is the $m_R$-adic completion of $M$. This combined with (\ref{eq5.2}) gives that we have an $R$-module isomorphism
\begin{equation}\label{eq5.4}
    (I^n)^{\sat}/I^n
    \cong \widehat{(I^n)^{\sat}/I^n}
    = [(I^n)^{\sat}\widehat{R}]/(I^n\widehat{R})
    = [(I\widehat{R})^n]^{\sat}/(I\widehat{R})^n.
\end{equation}

Since $m_R^C((I^n)^{\sat}/I^n)=0$,
\begin{equation*}
m_R^C[((I^n)^{\sat}\widehat{R})/(I^n\widehat{R})]=0
\end{equation*}
by the $R$-module isomorphism (\ref{eq5.4}).
Thus, $[(I\widehat{R})^n]^{\sat}/(I\widehat{R})^n$ is an $R/m_R^C\cong \widehat{R}/m_R^C\widehat{R}$-module.\\ Therefore
\begin{equation*}
    \begin{split}
        \ell_R((I^n)^{\sat}/I^n)
    &= \ell_R([(I\widehat{R})^n]^{\sat}/(I\widehat{R})^n)
    = \ell_{R/m_R^C}([(I\widehat{R})^n]^{\sat}/(I\widehat{R})^n)
    \\&=\ell_{\widehat{R}/m_{R}^C\widehat{R}}([(I\widehat{R})^n]^{\sat}/(I\widehat{R})^n)
    = \ell_{\widehat{R}}([(I\widehat{R})^n]^{\sat}/(I\widehat{R})^n)
    \end{split}
\end{equation*}for all $n\in\mathbb{N}$. The conclusion now follows by applying Theorem \ref{thm1} to the ideal $I\widehat{R}$ of the complete local ring $\widehat{R}$.
\end{proof}

\begin{Theorem}\label{thm3}
    Let $R$ be any $d$-dimensional Noetherian local ring such that the nildradical of $\widehat{R}$ has dimension less than $d$. Let $I\subset R$ an ideal. Then the limit
    \begin{equation*}
        \lim_{n\to\infty}\frac{\ell_R((I^n)^{\sat}/I^n)}{n^d}
    \end{equation*}exists.
\end{Theorem}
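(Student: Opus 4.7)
The plan is to combine Corollary \ref{thm2} with a separate, elementary treatment of the zero-dimensional case, since Corollary \ref{thm2} already covers every case with $d > 0$.

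For $d > 0$, Corollary \ref{thm2} applies verbatim and yields the existence of the limit; there is nothing further to do. This reduces the proof to the single remaining case $d = 0$.

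When $d = 0$, the Noetherian local ring $R$ is Artinian, and in particular the maximal ideal $m_R$ is nilpotent. Either $I = R$, in which case $(I^n)^{\sat}/I^n = 0$ for every $n$, or $I \subseteq m_R$, in which case $I^n \subseteq m_R^n = 0$ for all sufficiently large $n$. In either situation the ideal $I^n$ is eventually constant, hence so is $(I^n)^{\sat}$, and therefore the integer $\ell_R((I^n)^{\sat}/I^n)$ is eventually constant. Since $n^d = 1$, the limit exists and equals this stable value. (The same conclusion follows more abstractly from the descending chain condition applied to $\{I^n\}$ in any Artinian ring.)

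The main obstacle has already been handled upstream: Theorem \ref{thm1} settles the complete case in positive dimension, and Corollary \ref{thm2} then descends this from $\widehat{R}$ to $R$ by combining Lemma \ref{completionLemma} with the identity $(I^n)^{\sat}\widehat{R} = ((I\widehat{R})^n)^{\sat}$, which is a consequence of the flatness of $R \to \widehat{R}$. Theorem \ref{thm3} itself is simply the synthesis of Corollary \ref{thm2} with the trivial zero-dimensional case, and requires no new ideas.
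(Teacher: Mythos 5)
Your proposal is correct and follows essentially the same route as the paper: both reduce to the $d=0$ case via Corollary \ref{thm2} and then observe that in an Artinian local ring the powers $I^n$ stabilize, making $\ell_R((I^n)^{\sat}/I^n)$ eventually constant so that the limit trivially exists. The only cosmetic difference is that you spell out the dichotomy $I=R$ versus $I\subseteq m_R$ with $m_R$ nilpotent, whereas the paper invokes the descending chain condition directly to get $I^n=I^t$ for $n\geq t$; you note this alternative yourself, and the two are interchangeable.
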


\begin{proof}
    By Corollary \ref{thm2}, we may assume that $d:=\dim(R)=0$. Since $R$ is Artinian, there is a positive number $t$ so that $I^n = I^{t}$ for $n\geq t$. Then if $A:= \ell_R((I^t)^{\sat}/I^t)$, $\ell_R((I^n)^{\sat}/I^n)=A$ for $n\geq t$ so that the limit
    \begin{equation*}
        \lim_{n\to\infty}\frac{\ell_R((I^n)^{\sat}/I^n)}{n^d}
        = \lim_{n\to\infty}A/n^0=A
    \end{equation*}exists.
\end{proof}

Now we begin proving Theorem \ref{TheoremB}. We first recall the definition of the Amao multiplicity.

\begin{Definition}
    Let $R$ be a Noetherian local ring of dimension $d$ and let $I\subset J$ be ideals of $R$ such that $\ell_R(J/I)<\infty$. Then $\ell_R(J^n/I^n)<\infty$ for all $n>1$ and the limit
    \begin{equation*}
        a(I,J):=\lim_{n\to\infty}\frac{\ell_R(J^n/I^n)}{n^d/d!}
    \end{equation*}exists. $a(I,J)$ is called the Amao multiplicity. Amao multiplicities are developed in \cite{A1}, \cite{R1}, and page 332 of \cite{SH}.
\end{Definition}

We will prove the following statement.

\begin{Proposition}\label{thm4}
    Let $R$ be a $d$-dimensional complete (Noetherian) local ring, let $I\subset R$ be any ideal, and let $N$ be the nildradical of $R$. Suppose that $\dim(N)<d$. Then the limit
    \begin{equation*}
        \lim_{m\to\infty}\frac{a(I^m,(I^m)^{\sat})}{m^d}
    \end{equation*} exists, and
\begin{equation}\label{eq6.0}
    \varepsilon(I)=\lim_{m\to\infty}\frac{a(I^m,(I^m)^{\sat})}{m^d}.
\end{equation}
\end{Proposition}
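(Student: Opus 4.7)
The plan is to mirror, at the Amao-multiplicity level, the two-term decomposition used to prove Theorem \ref{thm1}. There, $\lim_n \ell_R((I^n)^{\sat}/I^n)/n^d$ was realized as $\varepsilon(I\overline R)/d!$ (the first limit of Equation \eqref{eq1}) minus the ``error'' limit of Proposition \ref{prop1}, bridged by Lemma \ref{lem1} and the exact sequence following Lemma \ref{lem2-1}. For Theorem \ref{TheoremB}, Lemma \ref{lem5} and Proposition \ref{prop2} are the volume-equals-multiplicity statements for those two length-level pieces, and summing them will give the desired formula $\varepsilon(I) = \lim_m a(I^m,(I^m)^{\sat})/m^d$.

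For Lemma \ref{lem5} (the ``reduced'' piece) I would apply Theorem 1.1 of \cite{CL}, the volume-equals-multiplicity formula for $\varepsilon$ in the analytically unramified case, to $\overline R = R/N$, which is analytically unramified of dimension $d$ by Lemma \ref{lem4} and the completeness of $R$. This yields $\varepsilon(I\overline R) = \lim_m a\bigl((I\overline R)^m,((I\overline R)^m)^{\sat}\bigr)/m^d$, the Amao multiplicities being computed in $\overline R$. Using Remark \ref{rmk2.1}(i)--(ii) to identify the relevant ideals and the fact that lengths of $\overline R$-modules as $R$- or as $\overline R$-modules coincide, the right-hand side translates to an expression in $R$ whose matching to Equation \eqref{eq1} gives the Lemma \ref{lem5} identity.

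For Proposition \ref{prop2} (the ``error'' piece) I would establish existence of the Amao-level analog of the Proposition \ref{prop1} limit, matching $\lim_m a\bigl((I^m)^{\sat}+N,(I^m+N)^{\sat}\bigr)/m^d$ with $d!\,\lim_n\ell_R((I^n+N)^{\sat}/((I^n)^{\sat}+N))/n^d$. The existence is proven by applying Theorem 6.1 of \cite{C2} to the filtrations $\{I_n\},\{J_n\}$ of $\overline R$ constructed in Lemma \ref{lem3}, whose mutual comparison $I_n\cap m_{\overline R}^{nc}=J_n\cap m_{\overline R}^{nc}$ from Remark \ref{rmk3} is provided by Swanson's Theorem 3.4 \cite{S1}. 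Amao-additivity on the chain $I^m+N\subset (I^m)^{\sat}+N\subset (I^m+N)^{\sat}$, together with a Lemma \ref{lem1}-style negligibility bound showing that $a(I^m,(I^m)^{\sat})$ and $a(I^m+N,(I^m)^{\sat}+N)$ differ by $o(m^d)$ (again using Swanson's theorem uniformly in $m$), then recombines the two pieces into $\lim_m a(I^m,(I^m)^{\sat})/m^d$, which by Theorem \ref{thm1} equals $\varepsilon(I)$.

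The principal obstacle is Proposition \ref{prop2}: the quantity $a\bigl((I^m)^{\sat}+N,(I^m+N)^{\sat}\bigr)$ is itself a limit in an auxiliary index $n$, so dividing by $m^d$ and letting $m\to\infty$ creates an iterated limit whose existence does not follow from Proposition \ref{prop1}. Choosing the graded filtrations $\{I_n\},\{J_n\}$ in $\overline R$ correctly and verifying the hypotheses of \cite{C2} Theorem 6.1 via Swanson's uniform Artin--Rees bound is the technical heart of the argument, exactly mirroring the strategy that proves Proposition \ref{prop1}.
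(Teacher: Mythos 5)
Your overall strategy matches the paper's: decompose $\varepsilon(I)$ into the reduced piece and the error piece, apply the analytically-unramified volume-equals-multiplicity formula (Theorem 1.1 of \cite{CL}) to $\overline R=R/N$ for the first, establish an Amao-level analog of Proposition~\ref{prop1} for the second via the filtrations $\{I_n\},\{J_n\}$, and finally bridge back to $a_R(I^m,(I^m)^{\sat})$ by a Lemma~\ref{lem1}-style negligibility argument (the paper's Lemma~\ref{lem7}). However, there is a concrete gap in your plan for Proposition~\ref{prop2}. You cite Theorem 6.1 of \cite{C2} as the tool that simultaneously gives existence and the ``matching'' of $\lim_n\ell_R(J_n/I_n)/n^d$ with the iterated limit $\lim_m\frac1{m^d}\lim_k\ell_R(J_m^k/I_m^k)/k^d$. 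Theorem 6.1 of \cite{C2} only gives existence of the single limit $\lim_n\ell(J_n/I_n)/n^d$ (that is Proposition~\ref{prop1}); it says nothing about equality with the doubly-indexed multiplicity limit. The paper proves that equality with Theorem 4.1 of \cite{CL}, a genuine volume-equals-multiplicity statement for pairs of filtrations, and this is the actual technical heart of the argument — not Theorem 6.1 of \cite{C2}.

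Related to this, the hypothesis that Theorem 4.1 of \cite{CL} requires is not the comparison $I_n\cap m_{\overline R}^{nc}=J_n\cap m_{\overline R}^{nc}$ from Remark~\ref{rmk3} (a single-index statement, coming from Swanson's Theorem 3.4), but the two-index comparison $I_m^k\cap m_{\overline R}^{Cmk}=J_m^k\cap m_{\overline R}^{Cmk}$ uniform in both $m$ and $k$ (the paper's Lemma~\ref{lem6}), which in turn needs Lemma 5.2 of \cite{CL} — a uniform statement about powers of saturated powers that Swanson's theorem alone does not give. Finally, be careful with the object $a\bigl((I^m)^{\sat}+N,(I^m+N)^{\sat}\bigr)$: if you mean the Amao multiplicity computed in $R$, then $[(I^m)^{\sat}+N]^k\neq[(I^m)^{\sat}]^k+N$ (one contains only $N^k$, the other all of $N$), so your chain-additivity would not produce the quantities that Theorem 1.1 of \cite{CL} controls. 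The paper avoids this by working with Amao multiplicities of the images in $\overline R$ throughout, and only at the end, in Lemma~\ref{lem7}, shows that for each fixed $m$ the $R$-Amao multiplicity $a_R(I^m,(I^m)^{\sat})$ agrees with $d!\lim_k\ell_R(([(I^m)^{\sat}]^k+N)/(I^{mk}+N))/k^d$.
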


If $N = 0$, then $R$ is analytically unramified. Consequently, the limits in (\ref{eq6.0}) exist and are equal by \cite{CL} Theorem 1.1.\\

Let $\overline{R} = R/N$, and let $\Tilde{I}:= I\overline{R}$, which equals $(I+N)/N$. We prove Proposition \ref{thm4} by first proving several lemmas (Lemma \ref{lem5}, Lemma \ref{lem6}, Proposition \ref{prop2}, and Lemma \ref{lem7}).

\begin{Lemma}\label{lem5}
    The limit
    \begin{equation*}
        \lim_{n\to\infty}\frac{\ell_R((I^n+N)^{\sat}/(I^n+N))}{n^d}
    \end{equation*}exists, the limits
    \begin{equation*}
        \lim_{k\to\infty}\frac{\ell_R(([(I^m+N)^{\sat}]^k+N)/(I^{mk}+N))}{k^d}
    \end{equation*}exist, the limit
    \begin{equation*}
        \lim_{m\to\infty}\frac{1}{m^d}\lim_{k\to\infty}\frac{\ell_R(([(I^m+N)^{\sat}]^k+N)/(I^{mk}+N))}{k^d}
    \end{equation*}exists, and we have the following formula
    \begin{equation}\label{eq6.1}
        \begin{split}
            \lim_{n\to\infty}&\frac{\ell_R((I^n+N)^{\sat}/(I^n+N))}{n^d}
        \\&=\lim_{m\to\infty}\frac{1}{m^d}\lim_{k\to\infty}\frac{\ell_R(([(I^m+N)^{\sat}]^k+N)/(I^{mk}+N))}{k^d}.
        \end{split}
    \end{equation}
\end{Lemma}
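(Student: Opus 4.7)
The plan is to transfer every object in the statement to the quotient $\overline{R} = R/N$ and invoke the analytically unramified case of Theorem \ref{TheoremB}, i.e.\ Theorem 1.1 of \cite{CL}. Because $R$ is complete and $N$ is its nilradical, $\overline{R}$ is both complete and reduced, hence analytically unramified; moreover $\dim\overline{R}=d$ by Lemma \ref{lem4}, so Theorem 1.1 of \cite{CL} applies to $\overline{R}$ with ideal $\Tilde{I}$.

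Using Remark \ref{rmk2.1}(i)--(ii) together with the third isomorphism theorem, I would first establish the two $R$-module isomorphisms
\begin{equation*}
    (I^n+N)^{\sat}/(I^n+N) \;\cong\; (\Tilde{I}^n)^{\sat}/\Tilde{I}^n
\end{equation*}
and
\begin{equation*}
    \bigl([(I^m+N)^{\sat}]^k+N\bigr)/(I^{mk}+N) \;\cong\; \bigl((\Tilde{I}^m)^{\sat}\bigr)^k/(\Tilde{I}^m)^k.
\end{equation*}
In both cases the $R$-action factors through $\overline{R}$, so lengths computed over $R$ agree with those over $\overline{R}$.

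With these identifications, the first limit in the lemma equals $\varepsilon(\Tilde{I})/d!$, whose existence is Corollary 6.3 of \cite{C2} applied to the analytically unramified ring $\overline{R}$. For each fixed $m$, the module $(\Tilde{I}^m)^{\sat}/\Tilde{I}^m$ has finite length over $\overline{R}$ (its support is contained in the single point $m_{\overline R}$, since saturation at the maximal ideal does not alter an ideal after localization at any other prime), so the Amao multiplicity $a(\Tilde{I}^m,(\Tilde{I}^m)^{\sat})$ is defined and the inner limit on the right of (\ref{eq6.1}) equals $a(\Tilde{I}^m,(\Tilde{I}^m)^{\sat})/d!$.

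After these reductions, formula (\ref{eq6.1}) becomes
\begin{equation*}
    \varepsilon(\Tilde{I}) \;=\; \lim_{m\to\infty}\frac{a(\Tilde{I}^m,(\Tilde{I}^m)^{\sat})}{m^d},
\end{equation*}
which is exactly Theorem 1.1 of \cite{CL} applied to $\overline{R}$ and $\Tilde{I}$. This simultaneously provides the existence of the outer limit and the asserted equality. The main step of the proof is thus the algebraic bookkeeping of the isomorphisms above via Remark \ref{rmk2.1} and the verification that lengths are preserved by the change of scalars $R\to\overline{R}$; there is no new analytic difficulty, since once the problem has been pushed into $\overline{R}$ the analytically unramified result of \cite{CL} carries all of the quantitative content.
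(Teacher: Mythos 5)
Your proof is correct and follows the same route as the paper: push the modules into $\overline{R}=R/N$ via Remark \ref{rmk2.1}, identify the two limits with $\varepsilon(\Tilde I)/d!$ and $a(\Tilde I^m,(\Tilde I^m)^{\sat})/d!$, and then invoke Theorem 1.1 of \cite{CL} for the analytically unramified ring $\overline{R}$. Your extra observation that $(\Tilde I^m)^{\sat}/\Tilde I^m$ has finite length (so the Amao multiplicity is defined) is a small but worthwhile addition that the paper leaves implicit.
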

\begin{proof}
    By Remark \ref{rmk2.1} (i) and (ii), we have $R$-module isomorphisms
    \begin{equation}\label{eq6.2'}
        \frac{(I^n+N)^{\sat}}{I^n+N}
        \cong \frac{(I^n+N)^{\sat}/N}{(I^n+N)/N}
        = \frac{(\Tilde{I}^n)^{\sat}}{\Tilde{I}^n}
    \end{equation}
    \begin{equation}\label{eq6.2''}
        \frac{[(I^m+N)^{\sat}]^k+N}{I^{mk}+N}
        \cong \frac{([(I^m+N)^{\sat}]^k+N)/N}{(I^{mk}+N)/N}
        = \frac{[(\Tilde{I}^m)^{\sat}]^k}{\Tilde{I}^{mk}}
    \end{equation}
    for all $n$, $m$, and $k$. Since $\dim(\overline{R})=d$, (\ref{eq6.2'}) implies that
    \begin{equation}\label{eq6.3'}
        \frac{1}{d!}\varepsilon(\Tilde{I}) = \lim_{n\to\infty}\frac{\ell_R((I^n+N)^{\sat}/(I^n+N))}{n^d}.
    \end{equation} (\ref{eq6.2''}) implies that
    \begin{equation}\label{eq6.3''}
        \frac{1}{d!}a(\Tilde{I}^m,(\Tilde{I}^m)^{\sat})
        = \lim_{k\to\infty}\frac{\ell_R(([(I^m+N)^{\sat}]^k+N)/(I^{mk}+N))}{k^d}
    \end{equation}for all $m$. Theorem 1.1 \cite{CL} applied to the ideal $\Tilde{I}$ in the $d$-dimensional complete reduced ring $\overline{R}$ yields that
    \begin{equation}\label{eq6.4}
        \frac{1}{d!}\varepsilon(\Tilde{I}) = \lim_{m\to\infty}\frac{1}{m^d}\frac{1}{d!}a(\Tilde{I}^m,(\Tilde{I}^m)^{\sat}).
    \end{equation}The proof of the lemma is complete by equations (\ref{eq6.3'}), (\ref{eq6.3''}), and (\ref{eq6.4}).
\end{proof}

Now we will define filtrations in $\overline{R}$ to which Theorem 4.1 \cite{CL} applies. We have ideals in $\overline{R}$
\begin{equation}\label{eqfilt}
    \begin{split}
        I_n&= ((I^n)^{\sat}+N)/N\\
        J_n&= (I^n+N)^{\sat}/N\\
        I(m)_k &:= I_m^k = ([(I^m)^{\sat}]^k+N)/N\\
        J(m)_k &:= J_m^k = ([(I^m+N)^{\sat}]^k+N)/N
    \end{split}
\end{equation}for all $n$, $m$, and $k$ (the preceding two lines follow from Remark \ref{rmk2.1} (ii)). The families of ideals $\cal{I}:=\{I_n\}$, $\cal{J}:=\{J_n\}$ $\cal{I}(m) :=\{I(m)_k\}$, and $\cal{J}(m):= \{J(m)_k\}$ are filtrations in $\overline{R}$ for all $m$ by Lemma \ref{lem3}.

\begin{Lemma}\label{lem6}
    There exists a positive number $C$ such that
    \begin{equation*}
        J(m)_k\cap m_{\overline{R}}^{Cmk}
        =I(m)_k\cap m_{\overline{R}}^{Cmk}
    \end{equation*}for all $m$ and $k$.
\end{Lemma}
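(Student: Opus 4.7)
My plan is to realize the filtration $\{J_m\}$ as a saturation filtration of $\Tilde{I}$ in $\overline{R}$ and then to invoke Swanson's Theorem 3.4 \cite{S1} only once, applied to the single ideal $\Tilde{I}$, so as to produce a constant $C$ that is automatically uniform in both $m$ and $k$.

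By Remark \ref{rmk2.1}(i), $J_m = (I^m+N)^{\sat}/N$ coincides with $(\Tilde{I}^m)^{\sat}$ inside $\overline{R}$, so
\[
J(m)_k \;=\; J_m^k \;=\; [(\Tilde{I}^m)^{\sat}]^k.
\]
A direct element-wise argument then shows the key inclusion $[(\Tilde{I}^m)^{\sat}]^k \subset (\Tilde{I}^{mk})^{\sat}$: if $x_1,\dots,x_k \in (\Tilde{I}^m)^{\sat}$ and $t$ is large enough that $x_i\, m_{\overline{R}}^{\,t}\subset \Tilde{I}^m$ for every $i$, then $(x_1\cdots x_k)\,m_{\overline{R}}^{\,kt}\subset \Tilde{I}^{mk}$. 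Consequently $J(m)_k \subset (\Tilde{I}^{mk})^{\sat}$ for all $m,k$.

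Next I would apply Swanson's Theorem 3.4 \cite{S1} to the single ideal $\Tilde{I}$ in $\overline{R}$, exactly as in equation (\ref{eq3}) in the proof of Lemma \ref{lem2}, to obtain a constant $C>0$ with $(\Tilde{I}^n)^{\sat}\cap m_{\overline{R}}^{\,Cn} = \Tilde{I}^n \cap m_{\overline{R}}^{\,Cn}$ for all $n\geq 1$. Specializing to $n=mk$ gives
\[
(\Tilde{I}^{mk})^{\sat}\cap m_{\overline{R}}^{\,Cmk} \;=\; \Tilde{I}^{mk}\cap m_{\overline{R}}^{\,Cmk}.
\]
Combining this with $J(m)_k\subset (\Tilde{I}^{mk})^{\sat}$ and with the trivial inclusion $\Tilde{I}^{mk}\subset I(m)_k$ (which follows from $I_m\supset \Tilde{I}^m$ and hence $I_m^k\supset \Tilde{I}^{mk}$) yields
\[
J(m)_k \cap m_{\overline{R}}^{\,Cmk} \;\subset\; (\Tilde{I}^{mk})^{\sat}\cap m_{\overline{R}}^{\,Cmk} \;=\; \Tilde{I}^{mk}\cap m_{\overline{R}}^{\,Cmk} \;\subset\; I(m)_k\cap m_{\overline{R}}^{\,Cmk},
\]
and the reverse inclusion is immediate from $I(m)_k\subset J(m)_k$.

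The only substantive point — and the place where one must resist a naive iteration of Lemma \ref{lem2} (which would produce a Swanson constant $c(m)$ depending on $m$ and give only the weaker conclusion $J_m\cap m_{\overline{R}}^{\,c(m)m} = I_m\cap m_{\overline{R}}^{\,c(m)m}$) — is to apply Swanson's theorem once, to $\Tilde{I}$ itself, and to absorb the $k$-th-power step into the elementary saturation inclusion $[(\Tilde{I}^m)^{\sat}]^k\subset (\Tilde{I}^{mk})^{\sat}$. This is what produces a bound of the form $m_{\overline{R}}^{\,Cmk}$ with $C$ independent of both $m$ and $k$, as required for the later application of Theorem 4.1 \cite{CL} to the pair of filtrations $\cal{I}(m),\cal{J}(m)$.
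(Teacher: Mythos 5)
Your proof is correct, and it does take a different route from the paper's. The paper cites Lemma 5.2 of \cite{CL}, which directly supplies the doubly-indexed uniform statement $\Tilde{I}^{mk}\cap m_{\overline{R}}^{Cmk} = [(\Tilde{I}^m)^{\sat}]^k\cap m_{\overline{R}}^{Cmk}$ for all $m,k$ with a single $C$, and then transports this through Remark \ref{rmk2.1} to identify both sides with $J(m)_k$ and (via the trivial inclusion $\Tilde{I}^{mk}\subset I(m)_k$) to squeeze $I(m)_k$ between them. You instead derive the same conclusion from the more elementary, singly-indexed Swanson Theorem 3.4 \cite{S1} applied once to $\Tilde{I}$, by inserting the intermediate object $(\Tilde{I}^{mk})^{\sat}$: the chain $J(m)_k = [(\Tilde{I}^m)^{\sat}]^k \subset (\Tilde{I}^{mk})^{\sat}$, Swanson's equality $(\Tilde{I}^{mk})^{\sat}\cap m_{\overline{R}}^{Cmk} = \Tilde{I}^{mk}\cap m_{\overline{R}}^{Cmk}$, and $\Tilde{I}^{mk}\subset I(m)_k$. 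The saturation-power inclusion $[(\Tilde{I}^m)^{\sat}]^k\subset(\Tilde{I}^{mk})^{\sat}$ that you verify element-wise is exactly the structural fact needed to make the single Swanson constant serve uniformly in both $m$ and $k$; in effect you are reproving the content of Lemma 5.2 \cite{CL} in passing. What your approach buys is self-containment (no dependence on the cited lemma from \cite{CL}), at the cost of a slightly longer argument; the paper's version is shorter because it outsources that step. Both establish the crucial point you flag at the end: a naive $m$-by-$m$ application of Lemma \ref{lem2} would yield a constant depending on $m$, which would be insufficient for the later application of Theorem 4.1 \cite{CL}.
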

\begin{proof}
    By Lemma 5.2 \cite{CL} applied to the ideal $\Tilde{I}$ in $\overline{R}$, there exists a positive number $C$ such that
    \begin{equation*}
        \Tilde{I}^{mk}\cap m_{\overline{R}}^{Cmk}
        = [(\Tilde{I}^m)^{\sat}]^k\cap m_{\overline{R}}^{Cmk}
    \end{equation*}for all $m$ and $k$. This combined with Remark \ref{rmk2.1} (i) and (ii) yields that
    \begin{equation*}
        \frac{I^{mk}+N}{N}\cap m_{\overline{R}}^{Cmk}
        = \Bigg(\frac{(I^m+N)^{\sat}+N}{N}\Bigg)^k\cap m_{\overline{R}}^{Cmk}
        = \frac{[(I^m+N)^{\sat}]^k+N}{N}\cap m_{\overline{R}}^{Cmk}
    \end{equation*} for all $m$ and $k$. Thus
    \begin{equation*}
        \begin{split}
            J(m)_k\cap m_{\overline{R}}^{Cmk}
            \subset
            \frac{I^{mk}+N}{N}\cap m_{\overline{R}}^{Cmk}
            \subset \frac{[(I^m)^{\sat}]^k+N}{N}\cap m_{\overline{R}}^{Cmk}
            = I(m)_k\cap m_{\overline{R}}^{Cmk}
        \end{split}
    \end{equation*}so that $J(m)_k\cap m_{\overline{R}}^{Cmk}
        =I(m)_k\cap m_{\overline{R}}^{Cmk}$ for all $m$ and $k$.
\end{proof}

\begin{Proposition}\label{prop2}
The limit
\begin{equation*}
    \lim_{m\to\infty}\frac{1}{m^d}\lim_{k\to\infty}
        \frac{\ell_R([((I^m+N)^{\sat})^k+N]/[((I^m)^{\sat})^k+N])}{k^d}
\end{equation*}exists and we have the following formula
    \begin{equation}\label{eq7}
        \begin{split}
            \lim_{n\to\infty}&\frac{\ell_R((I^n+N)^{\sat}/((I^n)^{\sat}+N))}{n^d}
        \\&=\lim_{m\to\infty}\frac{1}{m^d}\lim_{k\to\infty}
        \frac{\ell_R([((I^m+N)^{\sat})^k+N]/[((I^m)^{\sat})^k+N])}{k^d}.
        \end{split}
    \end{equation}
\end{Proposition}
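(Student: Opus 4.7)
The plan is to transfer both sides of \eqref{eq7} into the complete reduced ring $\overline{R}=R/N$ and apply Theorem 4.1 \cite{CL} (the volume equals multiplicity theorem for filtrations in an analytically unramified local ring) to the pair of filtrations $\cal{I},\cal{J}$ introduced in \eqref{eqfilt}. The preceding Lemma \ref{lem6} has been engineered precisely to supply the asymptotic closeness hypothesis that such a theorem requires.

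First, I would use Remark \ref{rmk2.1}(ii) and (iii), together with the inclusions $N\subset (I^n)^{\sat}+N\subset (I^n+N)^{\sat}$ and $N\subset ((I^m)^{\sat})^k+N\subset ((I^m+N)^{\sat})^k+N$, to obtain $R$-module isomorphisms
\begin{equation*}
    \frac{(I^n+N)^{\sat}}{(I^n)^{\sat}+N}\isom \frac{J_n}{I_n}
    \quad\text{and}\quad
    \frac{((I^m+N)^{\sat})^k+N}{((I^m)^{\sat})^k+N}\isom \frac{J(m)_k}{I(m)_k}.
\end{equation*}
Since both quotients are annihilated by $N$ they are $\overline{R}$-modules, and composition series over $R$ and over $\overline{R}$ coincide, so the lengths $\ell_R$ and $\ell_{\overline{R}}$ agree. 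Consequently \eqref{eq7} is equivalent to the statement that
\begin{equation*}
    \lim_{n\to\infty}\frac{\ell_{\overline{R}}(J_n/I_n)}{n^d}
    =\lim_{m\to\infty}\frac{1}{m^d}\lim_{k\to\infty}\frac{\ell_{\overline{R}}(J(m)_k/I(m)_k)}{k^d}
\end{equation*}
as an identity of limits in the complete reduced ring $\overline{R}$.

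Second, I would verify the hypotheses of Theorem 4.1 \cite{CL} for the pair of filtrations $\cal{I}\subset\cal{J}$ in $\overline{R}$. By Lemma \ref{lem3}, $\cal{I}$ and $\cal{J}$ are filtrations, and for each $m$ the Veronese-type families $\cal{I}(m)=\{I_m^k\}$ and $\cal{J}(m)=\{J_m^k\}$ are filtrations as well. By Lemma \ref{lem4}, $\dim\overline{R}=d>0$, and $\overline{R}$ is complete and reduced (hence analytically unramified). The essential asymptotic comparability hypotheses are furnished by Remark \ref{rmk3}, which gives $J_n\cap m_{\overline{R}}^{nc}=I_n\cap m_{\overline{R}}^{nc}$ for all $n$, and by Lemma \ref{lem6}, which gives the \emph{uniform} comparison $J(m)_k\cap m_{\overline{R}}^{Cmk}=I(m)_k\cap m_{\overline{R}}^{Cmk}$ for all $m,k$. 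Existence of the outer limit on the LHS is already known via Proposition \ref{prop1}; existence of the inner limits for each $m$ on the RHS follows from Theorem 6.1 \cite{C2} applied to the pair $\cal{I}(m),\cal{J}(m)$ using Lemma \ref{lem6}.

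Third, Theorem 4.1 \cite{CL} applied to $\cal{I}$ and $\cal{J}$ in $\overline{R}$ then delivers both the existence of the outer limit in $m$ on the RHS and the desired equality of double limit and single limit, which is exactly \eqref{eq7}. The main obstacle is ensuring that the hypotheses of Theorem 4.1 \cite{CL} are stated in a form that accepts a pair of filtrations satisfying the uniform asymptotic closeness of Lemma \ref{lem6} (rather than being stated only for a single ideal or a single filtration); assuming that is the case, the proposition reduces to invoking that theorem once the translation to $\overline{R}$ has been carried out.
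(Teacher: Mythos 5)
Your proof for the case $d>0$ is essentially the paper's own argument: translate to $\overline{R}=R/N$, observe that $\cal{I}$ and $\cal{J}$ are filtrations (Lemma \ref{lem3}) with $I(m)_k=I_m^k$ and $J(m)_k=J_m^k$, use Lemma \ref{lem6} for the uniform comparability $J(m)_k\cap m_{\overline R}^{Cmk}=I(m)_k\cap m_{\overline R}^{Cmk}$, and then invoke Theorem 4.1 \cite{CL} in the complete reduced $d$-dimensional ring $\overline{R}$. Two things are off, however, one small and one more substantive.

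The small one: you write ``By Lemma \ref{lem4}, $\dim\overline{R}=d>0$.'' Lemma \ref{lem4} only says $\dim\overline{R}=d$; it does not assert $d>0$. Proposition \ref{prop2} (via Proposition \ref{thm4}) is not stated with the restriction $d>0$, so you are silently adding a hypothesis. (Also, the isomorphisms in your first step are just the third isomorphism theorem combined with Remark \ref{rmk2.1}(i)--(ii); citing ``(ii) and (iii)'' is a bit loose, though the idea is fine.)

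The substantive gap: because Theorem 4.1 \cite{CL} is applied only when $d>0$, the paper gives a separate short argument for $d=0$. When $d=0$, $R$ is Artinian, so $m_R^t=0$ for large $t$, hence $J^{\sat}=J:0=R$ for every ideal $J$, and (after also checking the $I=R$ case) one finds $I_n=J_n=R/N$ and $I(m)_k=J(m)_k=R/N$ for large indices, so both sides of \eqref{eq7} vanish. Your proposal does not address this case at all, so as written it only establishes the proposition under the unstated extra assumption $d>0$. Adding the Artinian degenerate case (or explicitly justifying why $\dim N<d=0$ forces $N=0$, and then arguing both sides are trivially zero) would close the gap.
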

\begin{proof}
We will first prove the proposition when $d>0$. Assume that $d>0$. Since $\{J_n\}$ and $\{I_n\}$ are graded families, we have that $J(m)_1 = J_m$, $I(m)_1 = I_m$, $I(m)_k = I_m^k\subset I_{mk}$, $J(m)_k = J_m^k\subset J_{mk}$, and $I(m)_k\subset J(m)_k$ for all $m$ and $k$. Now by Lemma \ref{lem6} and Theorem 4.1 \cite{CL}, we have that
\begin{equation*}
    \lim_{n\to\infty}\frac{\ell_R(J_n/I_n)}{n^d}
    =
    \lim_{m\to\infty}\frac{1}{m^d}
    \lim_{k\to\infty}
    \frac{\ell_R(J(m)_k/I(m)_k)}{k^d}.
\end{equation*}
Equation (\ref{eq7}) now follows from (\ref{eqfilt}).\\

Now we must prove the proposition when $d = 0$. First suppose that $I = R$. Then for all $n$, $m$, and $k$, we have $I_n = R/N = J_n$, and $I(m)_k = I_m^k = (R/N)^k = R/N$, and similarly $J(m)_k = R/N$. Consequently,
\begin{equation*}
    \lim_{n\to\infty}\frac{\ell_R(J_n/I_n)}{n^d}
    = 0
    = \lim_{m\to\infty}\frac{1}{m^d}
    \lim_{k\to\infty}
    \frac{\ell_R(J(m)_k/I(m)_k)}{k^d}.
\end{equation*}Thus we may assume that $I\subset m_R$.\\

$R$ is Artinian, so we have that $I^n, N^k, m_R^t = 0$ for $n$, $k$, and $t$ sufficiently large. Thus for any ideal $J$ of $R$, $J^{\sat} = J:m_R^{\infty} = J:0 = R$. Now we have that
\begin{equation*}
    \begin{split}
        I_n &= R/N\\
        J_n &= N^{\sat}/N=R/N\\
        I(m)_k &= I_m^k= (R/N)^k = R/N\\
        J(m)_k &= J_m^k = (R/N)^k = R/N
    \end{split}
\end{equation*}for $n$, $m$, and $k$ large enough. Consequently,
\begin{equation*}
    \lim_{n\to\infty}\frac{\ell_R(J_n/I_n)}{n^d}
    = 0 = 
    \lim_{m\to\infty}\frac{1}{m^d}
    \lim_{k\to\infty}
    \frac{\ell_R(J(m)_k/I(m)_k)}{k^d}.
\end{equation*} which finishes the proof of the proposition.
\end{proof}

\begin{Lemma}\label{lem7}
    For all $m$, we have that
    \begin{equation*}
        \frac{1}{d!}a(I^m,(I^m)^{\sat})
        = \lim_{k\to\infty}
        \frac{\ell_R(([(I^m)^{\sat}]^k+N)/(I^{mk}+N))}{k^d}.
    \end{equation*}
\end{Lemma}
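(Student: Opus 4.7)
The plan is to compare the length $\ell_R([(I^m)^{\sat}]^k/I^{mk})$, whose limit divided by $k^d$ equals $\frac{1}{d!}a(I^m,(I^m)^{\sat})$ by the definition of the Amao multiplicity, with the length on the right-hand side, and show they differ by $o(k^d)$. The first step is to note that there exists $t$ depending only on $m$ such that $m_R^t(I^m)^{\sat}\subset I^m$, whence $m_R^{tk}[(I^m)^{\sat}]^k\subset I^{mk}$. This has two immediate consequences: the module $[(I^m)^{\sat}]^k/I^{mk}$ is annihilated by a power of $m_R$ and so has finite length (ensuring the Amao multiplicity is well-defined), and the inclusion $[(I^m)^{\sat}]^k\subset (I^{mk})^{\sat}$ holds.

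Next I would analyze the natural surjection
\begin{equation*}
    \frac{[(I^m)^{\sat}]^k}{I^{mk}} \to \frac{[(I^m)^{\sat}]^k+N}{I^{mk}+N},
\end{equation*}
whose kernel is $([(I^m)^{\sat}]^k\cap(I^{mk}+N))/I^{mk}$. Applying Lemma \ref{lem2-1} with $I=[(I^m)^{\sat}]^k\supset I^{mk}$ and $K=N$, the intersection simplifies to $I^{mk}+([(I^m)^{\sat}]^k\cap N)$, so the kernel is $R$-module isomorphic to
\begin{equation*}
    \frac{[(I^m)^{\sat}]^k\cap N}{I^{mk}\cap N}.
\end{equation*}
Passing to lengths, the lemma reduces to showing this quotient has length $o(k^d)$ as $k\to\infty$ with $m$ fixed.

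For the estimate, the inclusion $[(I^m)^{\sat}]^k\subset(I^{mk})^{\sat}$ established above gives an injection
\begin{equation*}
    \frac{[(I^m)^{\sat}]^k\cap N}{I^{mk}\cap N}\hookrightarrow \frac{(I^{mk})^{\sat}\cap N}{I^{mk}\cap N}.
\end{equation*}
By Lemma \ref{lem1}, $\ell_R((I^n)^{\sat}\cap N/I^n\cap N)/n^d\to 0$ as $n\to\infty$; restricting to the subsequence $n=mk$ and using $k^d=(mk)^d/m^d$ with $m$ fixed, the right-hand length divided by $k^d$ tends to zero, and the same holds for the left. Combining this with the existence of $\frac{1}{d!}a(I^m,(I^m)^{\sat})$ as a limit shows that the limit on the right in the statement of Lemma \ref{lem7} exists and equals the Amao multiplicity divided by $d!$. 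No serious obstacle is anticipated: the heavy lifting is already packaged in Lemmas \ref{lem1} and \ref{lem2-1}, and the only point requiring care is the observation $[(I^m)^{\sat}]^k\subset(I^{mk})^{\sat}$, which is what permits the direct invocation of Lemma \ref{lem1} along the subsequence $n=mk$.
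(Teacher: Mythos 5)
Your argument is correct and uses the same structural backbone as the paper — the exact sequence, the identification of the kernel via Lemma \ref{lem2-1} with $\left([(I^m)^{\sat}]^k\cap N\right)/\left(I^{mk}\cap N\right)$, and the reduction to showing this term is $o(k^d)$ — but it handles the final estimate by a genuinely different and somewhat cleaner route. The paper invokes Lemma 5.2 of \cite{CL} to produce a uniform $c_1$ with $[(I^m)^{\sat}]^k\cap m_R^{c_1 mk}=I^{mk}\cap m_R^{c_1 mk}$ for all $m,k$, then injects the kernel into $N/Nm_R^{c_1 mk}$ and concludes from $\dim N<d$. You instead observe the elementary inclusion $[(I^m)^{\sat}]^k\subset (I^{mk})^{\sat}$ (via $m_R^{tk}[(I^m)^{\sat}]^k\subset I^{mk}$), inject the kernel into $\left((I^{mk})^{\sat}\cap N\right)/\left(I^{mk}\cap N\right)$, and quote Lemma \ref{lem1} along the subsequence $n=mk$, with the constant factor $m^d$ harmless for fixed $m$. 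Your version is more self-contained: it avoids the external double-indexed Swanson-type bound from \cite{CL} and reuses a lemma already proved earlier in the paper, at no loss of generality for this statement. One should just be explicit, as you were, that $[(I^m)^{\sat}]^k\cap N/I^{mk}\cap N$ has finite length (being a submodule of $[(I^m)^{\sat}]^k/I^{mk}$, which is killed by $m_R^{tk}$), so that the inequality of lengths via the injection makes sense.
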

\begin{proof}
    We have an exact sequence
    \begin{equation}\label{eq8}
        0\to\frac{[(I^m)^{\sat}]^k\cap (I^{mk}+N)}{I^{mk}}\to \frac{[(I^m)^{\sat}]^k}{I^{mk}}\to \frac{[(I^m)^{\sat}]^k+N}{I^{mk}+N}\to 0
    \end{equation} while Lemma \ref{lem2-1} implies that
    \begin{equation*}
        \frac{[(I^m)^{\sat}]^k\cap (I^{mk}+N)}{I^{mk}}= \frac{I^{mk}+([(I^m)^{\sat}]^k\cap N)}{I^{mk}}\cong \frac{[(I^m)^{\sat}]^k\cap N}{I^{mk}\cap N}.
    \end{equation*} By (\ref{eq8}) we now have an exact sequence
    \begin{equation}\label{eq8.1}
        0\to\frac{[(I^m)^{\sat}]^k\cap N}{I^{mk}\cap N}\to \frac{[(I^m)^{\sat}]^k}{I^{mk}}\to \frac{[(I^m)^{\sat}]^k+N}{I^{mk}+N}\to 0
    \end{equation}By Lemma 5.2 \cite{CL} there exists $c_1>0$ such that
    \begin{equation}\label{eq8.2}
        [(I^m)^{\sat}]^k\cap m_R^{c_1 mk} = I^{mk}\cap m_R^{c_1 mk}
    \end{equation}for all $m$ and $k$. For all $m$ and $k$, we have a natural surjection
    \begin{equation*}
        \frac{[(I^m)^{\sat}]^k \cap N}{I^{mk}\cap Nm_R^{c_1 mk}}\to \frac{([(I^m)^{\sat}]^k \cap N)/(I^{mk}\cap Nm_R^{c_1 mk})}{(I^{mk}\cap N)/(I^{mk}\cap Nm_R^{c_1 mk})} \cong \frac{[(I^m)^{\sat}]^k\cap N}{I^{mk}\cap N}
    \end{equation*} and a natural map
    \begin{equation*}
        \frac{[(I^m)^{\sat}]^k \cap N}{I^{mk}\cap Nm_R^{c_1 mk}}\to \frac{N}{Nm_R^{c_1 mk}}
    \end{equation*}which is injective by (\ref{eq8.2}). Thus,
    \begin{equation*}
        \ell_R\bigg(\frac{[(I^m)^{\sat}]^k\cap N}{I^{mk}\cap N}\bigg) \leq \ell_R(N/Nm_R^{c_1mk}).
    \end{equation*}This combined with the exact sequence (\ref{eq8.1}) completes the proof of the lemma, since $\dim N<d$.
\end{proof}

We are now ready to prove Proposition \ref{thm4}. Note that $((I^m)^{\sat})^k+N\subset ((I^m+N)^{\sat})^k+N$ for all $m$ and $k$. We have an exact sequence
\begin{equation}
    0\to \frac{[(I^m)^{\sat}]^k+N}{I^{mk}+N}
    \to \frac{[(I^m+N)^{\sat}]^k+N}{I^{mk}+N}
    \to \frac{[(I^m+N)^{\sat}]^k+N}{[(I^m)^{\sat}]^k+N}\to 0
\end{equation} so that
\begin{equation}\label{eq9.1}
    \begin{split}
        \ell_R(([(I^m)^{\sat}]^k+N)/&(I^{mk}+N))
    = \ell_R(([(I^m+N)^{\sat}]^k+N)/(I^{mk}+N))
    \\&- \ell_R(([(I^m+N)^{\sat}]^k+N)/([(I^m)^{\sat}]^k+N))
    \end{split}
\end{equation}for all $m$ and $k$. The exact sequence (\ref{eq2}) implies that
\begin{equation}\label{eq9.2}
    \begin{split}
        \ell_R(((I^n)^{\sat}+N)/(I^n+N))
    &= \ell_R((I^n+N)^{\sat}/(I^n+N))
    \\&- \ell_R((I^n+N)^{\sat}/((I^n)^{\sat}+N))
    \end{split}
\end{equation}for all $n$. Subtracting (\ref{eq7}) from (\ref{eq6.1}) yields that
\begin{equation}\label{eq9.3}
     \begin{split}
            &\lim_{n\to\infty}\frac{\ell_R((I^n+N)^{\sat}/(I^n+N))-\ell_R((I^n+N)^{\sat}/((I^n)^{\sat}+N))}{n^d}
        \\&=\lim_{m\to\infty}\frac{1}{m^d}\lim_{k\to\infty}\frac{\ell_R\Big(\frac{[(I^m+N)^{\sat}]^k+N}{I^{mk}+N}\Big)-\ell_R\Big(\frac{[(I^m+N)^{\sat}]^k+N}{[(I^m)^{\sat}]^k+N}\Big)}{k^d}
        \end{split}
\end{equation} Rewriting (\ref{eq9.3}) using (\ref{eq9.1}) and (\ref{eq9.2}) yields
\begin{equation*}
    \lim_{n\to\infty}\frac{\ell_R(((I^n)^{\sat}+N)/(I^n+N))}{n^d}
    =\lim_{m\to\infty}\frac{1}{m^d}
    \lim_{k\to\infty}\frac{\ell_R(([(I^m)^{\sat}]^k+N)/(I^{mk}+N))}{k^d}.
\end{equation*}Now by Lemma \ref{lem7} we have
\begin{equation}\label{eq9.4}
    \lim_{n\to\infty}\frac{\ell_R(((I^n)^{\sat}+N)/(I^n+N))}{n^d}
    =\lim_{m\to\infty}\frac{1}{m^d}
    \frac{1}{d!}a(I^m,(I^m)^{\sat}).
\end{equation}
Remark \ref{rmk2} is valid regardless of whether $d>0$ or $d=0$. We have that\\ $\lim_{n\to \infty}\ell_R(((I^n)^{\sat}+N)/(I^n+N))n^{-d}$ exists by (\ref{eq9.2}), Proposition \ref{prop2}, and Lemma \ref{lem5}. Now Remark \ref{rmk2} and equation (\ref{eq9.4}) together imply that
\begin{equation*}
    \lim_{n\to\infty}\frac{\ell_R((I^n)^{\sat}/I^n)}{n^d}
    =\frac{1}{d!} \lim_{m\to\infty}\frac{a(I^m,(I^m)^{\sat})}{m^d}
\end{equation*} so that
\begin{equation*}
    \varepsilon(I) = \lim_{m\to\infty}\frac{a(I^m,(I^m)^{\sat})}{m^d}
\end{equation*} which finishes the proof of Proposition \ref{thm4}.\\

Proposition \ref{thm4} has the following consequence.

\begin{Theorem}\label{thm5}
    Let $R$ be a $d$-dimensional Noetherian local ring and let $I\subset R$ be an ideal in $R$. Suppose that the dimension of the nildradical of $\widehat{R}$ is less than $d$. Then the limit
    \begin{equation*}
        \lim_{m\to\infty}\frac{a(I^m,(I^m)^{\sat})}{m^d}
    \end{equation*} exists, and
\begin{equation*}
    \varepsilon(I)=\lim_{m\to\infty}\frac{a(I^m,(I^m)^{\sat})}{m^d}.
\end{equation*}
\end{Theorem}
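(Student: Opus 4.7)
The plan is to deduce Theorem \ref{thm5} from Proposition \ref{thm4} by passing to the completion $\widehat{R}$, which has $\dim\widehat{R}=d$ and satisfies the same hypothesis on its nildradical by assumption. The reduction has the same structure as the reduction of Corollary \ref{thm2} to Theorem \ref{thm1}: it boils down to showing that both sides of the claimed equality are preserved under completion.

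For the left-hand side, Theorem \ref{thm3} already gives that $\varepsilon(I)$ exists as a limit, and the length computation of Corollary \ref{thm2} — specifically the isomorphism (\ref{eq5.4}) — shows $\ell_R((I^n)^{\sat}/I^n)=\ell_{\widehat{R}}(((I\widehat{R})^n)^{\sat}/(I\widehat{R})^n)$ for every $n$, so $\varepsilon(I)=\varepsilon(I\widehat{R})$.

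For the Amao multiplicities, I would fix $m\geq 1$ and choose $c\geq 1$ so that $(I^m)^{\sat}=I^m:m_R^c$, which forces $m_R^{cn}\cdot((I^m)^{\sat})^n\subset I^{mn}$ for every $n\geq 1$. Then $M_{m,n}:=((I^m)^{\sat})^n/I^{mn}$ is a finitely generated $R/m_R^{cn}$-module and in particular a finitely generated Artinian $R$-module, so Lemma \ref{completionLemma} gives an $R$-module isomorphism $M_{m,n}\cong\widehat{M_{m,n}}$. Flatness of $R\to\widehat{R}$ combined with (\ref{eq5.2}) yields
\begin{equation*}
    \widehat{M_{m,n}}=\frac{((I^m)^{\sat})^n\widehat{R}}{I^{mn}\widehat{R}}=\frac{(((I\widehat{R})^m)^{\sat})^n}{(I\widehat{R})^{mn}},
\end{equation*}
and since this module is annihilated by $m_R^{cn}$ its $R$-length and $\widehat{R}$-length coincide. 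Dividing by $n^d/d!$ and letting $n\to\infty$ produces $a(I^m,(I^m)^{\sat})=a((I\widehat{R})^m,((I\widehat{R})^m)^{\sat})$ for every $m$.

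Finally, Proposition \ref{thm4} applied to $I\widehat{R}$ in $\widehat{R}$ gives that the limit $\lim_{m\to\infty}a((I\widehat{R})^m,((I\widehat{R})^m)^{\sat})/m^d$ exists and equals $\varepsilon(I\widehat{R})$. Combining this with $\varepsilon(I)=\varepsilon(I\widehat{R})$ and the term-by-term identification of the Amao multiplicities delivers the theorem; the $d=0$ case requires no separate argument since Proposition \ref{thm4} already covers it. No serious obstacle is anticipated: the only point requiring any care is verifying that $((I^m)^{\sat})^n$ extends correctly to $\widehat{R}$, and this reduces immediately to (\ref{eq5.2}) together with the fact that extension of ideals commutes with finite products.
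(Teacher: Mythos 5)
Your proposal follows the same overall reduction as the paper's proof: pass to the completion, verify that $\varepsilon(I)=\varepsilon(I\widehat{R})$ and that $a(I^m,(I^m)^{\sat})=a((I\widehat{R})^m,((I\widehat{R})^m)^{\sat})$ for each fixed $m$, then apply Proposition \ref{thm4} to $I\widehat{R}$. The one place you genuinely diverge is the step showing that $M_{m,n}=((I^m)^{\sat})^n/I^{mn}$ is killed by a power of $m_R$, which is what licenses the use of Lemma \ref{completionLemma}. The paper appeals to Lemma 5.2 of \cite{CL}, a Swanson-type uniform Artin--Rees statement giving $c_1$ with $[(I^m)^{\sat}]^k\cap m_R^{c_1 mk}=I^{mk}\cap m_R^{c_1 mk}$ for \emph{all} $m,k$; you instead fix $m$, pick $c$ with $(I^m)^{\sat}=I^m:m_R^c$, and observe directly that $(a_1y_1)\cdots(a_ny_n)\in I^{mn}$ for $a_i\in(I^m)^{\sat}$, $y_i\in m_R^c$, hence $m_R^{cn}\cdot((I^m)^{\sat})^n\subset I^{mn}$. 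Your argument is correct and more elementary, and it suffices here because $m$ is fixed when Lemma \ref{completionLemma} is invoked, so no uniformity in $m$ is required; the paper's citation buys uniformity that this particular step does not actually use. Everything else in your write-up --- the flatness computation identifying $\widehat{M_{m,n}}$ via (\ref{eq5.2}), the equality of $R$- and $\widehat{R}$-lengths once the module is seen to be an $R/m_R^{cn}$-module, and the final assembly via Proposition \ref{thm4} --- matches the paper.
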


\begin{proof}
    The fact from the proof of Corollary \ref{thm2} that
    \begin{equation*}
        \ell_R((I^n)^{\sat}/I^n)
        =\ell_{\widehat{R}}\big(\big[\big(I\widehat{R}\big)^n\big]^{\sat}/\big(I\widehat{R}\big)^n\big)
    \end{equation*} for $n\geq 1$ is valid even when $d=0$. Consequently,
    \begin{equation*}
        \varepsilon(I)
        = \varepsilon(I\widehat{R}).
    \end{equation*}This combined with Proposition \ref{thm4} yields that
    \begin{equation}\label{eq10.1}
        \varepsilon(I)
        = \lim_{m\to\infty}
        \frac{a((I\widehat{R})^m,[(I\widehat{R})^m]^{\sat})}{m^d}.
    \end{equation} Fix $m,k\geq 1$ and let $M = [(I^m)^{\sat}]^k/I^{mk}$. Next we will show that
    \begin{equation*}
        \ell_R(M)
        =\ell_{\widehat{R}}(([(I\widehat{R})^m]^{\sat})^k/(I\widehat{R})^{mk}).
    \end{equation*}By Lemma 5.2 \cite{CL}, there exists $D\in\mathbb{Z}_{>0}$ such that $m_R^DM=m_R^D([(I^m)^{\sat}]^k/I^{mk})=0$. So $M$ is an $R/m_R^D$-module. Consequently $M$ is a finitely generated Artin $R$-module, so Lemma \ref{completionLemma} implies that the map
    \begin{equation*}
        [(I^m)^{\sat}]^k/I^{mk}
        \to ([(I\widehat{R})^m]^{\sat})^k/(I\widehat{R})^{mk}
    \end{equation*}
    is an $R$-module isomorphism.
    Now since $m_R^D([(I^m)^{\sat}]^k/I^{mk})=0$,\\ $m_R^D[([(I\widehat{R})^m]^{\sat})^k/(I\widehat{R})^{mk}]=0$ so that
    \begin{equation*}
        \begin{split}
            \ell_R([(I^m)^{\sat}]^k/I^{mk})
            &=
            \ell_R(([(I\widehat{R})^m]^{\sat})^k/(I\widehat{R})^{mk})
            =\ell_{R/m_R^D}(([(I\widehat{R})^m]^{\sat})^k/(I\widehat{R})^{mk})
            \\&=\ell_{\widehat{R}/m_{\widehat{R}}^D}(([(I\widehat{R})^m]^{\sat})^k/(I\widehat{R})^{mk})
            =\ell_{\widehat{R}}(([(I\widehat{R})^m]^{\sat})^k/(I\widehat{R})^{mk})
        \end{split}
    \end{equation*}for all $m$ and $k$. Therefore
    \begin{equation*}
        \begin{split}
            \frac{1}{d!}a((I\widehat{R})^m,[(I\widehat{R})^m]^{\sat})
        & = \lim_{k\to\infty}\frac{\ell_{\widehat{R}}\Big(([(I\widehat{R})^m]^{\sat})^k/(I\widehat{R})^{mk}\Big)}{k^d}
        \\&=
        \lim_{k\to\infty}\frac{\ell_R([(I^m)^{\sat}]^k/I^{mk})}{k^d}
        =\frac{1}{d!}
        a(I^m,(I^m)^{\sat}).
        \end{split}
    \end{equation*}
    This combined with (\ref{eq10.1}) completes the proof of the theorem.
\end{proof}

\end{document}